\documentclass{amsart}
\usepackage{amssymb,latexsym,graphics,enumerate,color}
\usepackage{fullpage}
\numberwithin{equation}{section}
\usepackage{bbm}
\usepackage[normalem]{ulem}

\usepackage{xcolor,cancel}

 \usepackage[colorlinks=true]{hyperref}
 \hypersetup{urlcolor=blue, citecolor=blue}

\newtheorem{theorem}{Theorem}[section]
\newtheorem{corollary}[theorem]{Corollary}
\newtheorem{lemma}[theorem]{Lemma}
\newtheorem{proposition}[theorem]{Proposition}

\newcommand{\la}{\langle}
\newcommand{\ra}{\rangle}

\renewcommand{\O}{{\mathcal{O}}}

\newcommand{\R}{{\mathbb{R}}}

\newcommand{\cd}{{\,\cdot\,}}

\newcommand{\ang}{{\not\negmedspace\nabla}}

\newcommand{\C}{{\mathcal{C}}}

\renewcommand{\L}{{\mathcal{L}}}

\newcommand{\tC}{{\tilde{C}}}

\renewcommand{\S}{{\mathbb{S}}}
\newcommand{\supp}{{\text{supp }}}

\begin{document}
\bibliographystyle{plain}

\title[Strauss conjecture with critical damping]
{The Strauss conjecture for the wave equation with critical damping}

\author{Elliot Adamson}
\address{Department of Mathematics, University of North Carolina,
  Chapel Hill, NC  27599-3250, USA}
\email{elliotadamsonunc@outlook.com}

\author{Jason Metcalfe}
\address{Department of Mathematics, University of North Carolina,
  Chapel Hill, NC  27599-3250, USA}
\email{metcalfe@email.unc.edu}

\author{Chengbo Wang}
\address{
  School of Mathematical Sciences\\
  Zhejiang Normal University\\
  Jinhua 321004, China
}
\email{wangcbo@zjnu.edu.cn}

\thanks{The first two authors gratefully acknowledge partial support
  from NSF grant 2135998, and the third author acknowledges the
  support of NSFC 12671293}

\begin{abstract}
The Strauss conjecture concerns wave equations
with power-type nonlinearities and seeks the optimal range of powers
that admit global solutions for all sufficiently nice and sufficiently
small initial data.  In three spatial dimensions, it is well known
that small data global existence is available provided that the power
$p>p_c(3)=1+\sqrt{2}$, where $p_c(n)$ denotes the critical power in
$n$-spatial dimensions.  In the current article, the analogous problem
is considered for damped wave equations where the damping coefficient
is at the critical scaling.  When $a$ denotes the constant multiple
applied to the damping term, it is shown that the corresponding
critical power is $p_c(3+a)$ provided that $a$ is sufficiently small.
\end{abstract}

\maketitle

\section{Introduction}
This article focuses on the analog of the Strauss conjecture for the
wave equation with damping at the critical scaling.  For the free wave
equation, one considers equations modeled by
\[\Box u = |u|^p,\quad (t,x)\in \R_+\times \R^n\]
where $\Box = \partial_t^2-\Delta$.  For sufficiently nice and
sufficiently small data, it is known that global existence holds if
and only if $p>p_c(n)$ where $p_c(n)>1$ solves
\[(n-1)p_c^2-(n+1)p_c-2=0.\]
Notably, $p_c(3)=1+\sqrt{2}$.  When $n=3$, this statement was proved
in \cite{John79}.  In arbitrary dimension, existence was established
in \cite{GLS97}, \cite{Ta01-2}, while blow up for $p<p_c(n)$ was
addressed in \cite{Sideris}.

In the current article, we instead consider
\begin{equation}
  \label{main_equation}
  \begin{cases}
    \partial_t^2w - \Delta w + \frac{a}{1+|x|}\partial_t w =
    F_p(w),\quad (t,x)\in \R_+\times \R^3,\\
    w(0,\cd)=\varepsilon w_{(0)},\quad \partial_t w(0,\cd)=\varepsilon
    w_{(1)}.
  \end{cases}
\end{equation}
We assume that $a>0$ and
\begin{equation}\label{Fp}
  \sum_{0\le j\le 2} |w|^j |\partial_w^j F_p(w)|\lesssim
  |w|^p,\quad\text{ for } |w|\ll 1.
  \end{equation}
  We will work with $w_{(0)}, w_{(1)}\in C^\infty_c(\R^3)$, though it
  is quite likely that this assumption can be relaxed.  While assuming
  that the data are compactly supported, we may without loss of
  generality take $\supp\, w_{(0)},
w_{(1)}\subseteq \{|x|\le \frac{1}{2}\}$.

For an introduction to this problem when the damping term is replaced
by $\frac{a}{(1+|x|)^\beta}$ see \cite{Ikehata-Todorova-Yordanov},
\cite{Lai-Tu}, \cite{Lai-Liu-Tu-Wang}, and
\cite{Nishihara-Sobajima-Wakasugi}.  In short, when $\beta<1$, the
parabolic behavior dominates, and the critical exponent is related to
the Fujita exponent.  On
the other hand, when $\beta>1$, the damping is too weak to alter the
Strauss exponent.  See \cite{AMW} for a proof of small data global
existence for $p>1+\sqrt{2}$ and \cite{Lai-Liu-Tu-Wang} for some upper
bounds on the lifespan when $p\le 1+\sqrt{2}$.

The current article focuses on the critical $\beta = 1$ case.  The
conjectured critical exponent is $p_c(3+a)$ for $0<a\le 1$.
The intuition centers around the radial part of the linear operator
near spatial infinity, which looks like
\[\partial_t^2 - \partial_r^2 - \frac{2}{r}\partial_r +
  \frac{a}{r}\partial_t,\]
and can be rewritten as
\[\partial_t^2 - \partial_r^2 - \frac{2+a}{r}\partial_r +
  \frac{a}{r}(\partial_t+\partial_r).\]
The latter resembles a wave equation in ``$(3+a)$-dimensions'' along with a
first order perturbation that has the structure of a ``good''
derivative and is anticipated to be harmless.

When $p\le p_c(3+a)$, see
\cite{Li} for some upper bounds on the lifespan when $a$ is large
enough that the Fujita exponent comes into play and
\cite{Lai-Liu-Tu-Wang} for smaller $a$.  The main result of this paper
gives global existence for $p>p_c(3+a)$ for $a$ sufficiently small.
As an artifact of the techniques, we require $0<a\le\frac{1}{7}$, which
is not the full range that is conjectured.  I.e.,
we require that $p>\max(p_c(3+a), \frac{7}{3})$.  For global existence
when $a> 1$, where the parabolic behavior enters, see \cite{Sobajima}.

In order to prove global existence, we will study a conjugated
equation.  It is tempting to conjugate by $(1+|x|)^{\frac{a}{2}}$, and
in essence, this is what we do.  Using this, however, would result in
terms in the commutator of the linear operator and the translational
vector fields that are singular near $r=0$.  To avoid this, we fix
$\beta\in C^\infty(\R)$ nondecreasing with
\[
  \beta(r)=\begin{cases}1,&r\ge 2,\\
    0,&r\le 1.\end{cases}
\]
Defining
\[\phi(r)=\int_0^r \frac{\beta(\rho)}{1+\rho}\,d\rho,\]
we set
\begin{equation}\label{v}v = e^{\frac{a}{2}\phi(r)}w.\end{equation}
One can readily check that there exist $c, C>0$ so that
\begin{equation}\label{vw}c(1+r)^{\frac{a}{2}} w \le v \le C (1+r)^{\frac{a}{2}} w.\end{equation}
We may then compute
\begin{equation}
  \label{conjugated}
  \begin{cases}
\Box v +
\frac{a\beta}{1+r}\Bigl(\partial_t+\partial_r+\frac{1}{r}\Bigr) v -
\frac{a(a+2)\beta}{4(1+r)^2} v
+ \frac{a(1-\beta)}{1+r}\partial_tv + \frac{a\beta'}{2(1+r)} v +
\frac{a^2\beta (1-\beta)}{4(1+r)^2} v= e^{\frac{a}{2}\phi}
F_p(e^{-\frac{a}{2}\phi} v),\\
v(0,\cd)=\varepsilon v_{(0)}, \quad \partial_t v(0,\cd)=\varepsilon v_{(1)}.
\end{cases}
\end{equation}
We will use $\L$ to denote the linear operator
\[
\L = \Box  +
\frac{a\beta(r)}{1+r}\Bigl(\partial_t+\partial_r+\frac{1}{r}\Bigr) -
\frac{a(a+2)\beta(r)}{4(1+r)^2} 
+ \frac{a(1-\beta(r))}{1+r}\partial_t + \frac{a\beta'(r)}{2(1+r)}  +
\frac{a^2\beta(r) (1-\beta(r))}{4(1+r)^2}.
  \]
In particular, we note that the last three terms in the left side of \eqref{conjugated} are
supported in $\{r\le 2\}$.
If we show that $v$ exists globally for $\varepsilon$ sufficiently
small and smooth and
compactly supported $v_{(0)}, v_{(1)}$, then the corresponding
existence statement for $w$ is immediate.

The main result then states:
\begin{theorem}\label{main_thm}
Let $a>0$ and $\max(p_c(3+a), \frac{7}{3}) < p < \frac{4}{2+a}+1$, and suppose that $v_{(0)},
v_{(1)}\in C^\infty_c(\R^3)$.  Then if $\varepsilon>0$ is sufficiently
small, there exists a global solution $v$ to \eqref{conjugated}.
\end{theorem}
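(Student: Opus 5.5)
We plan to run a contraction (or continuity) argument in a weighted space-time norm adapted to the ``$(3+a)$-dimensional'' structure of $\L$. The model is the approach used for the Strauss conjecture in $n=3$ and on exterior domains: weighted Strichartz estimates of Keel--Smith--Sogge / Fang--Wang type, combined with a localized energy (KSS) estimate. The weights are allowed to vary in both $\la t-r\ra$ and $\la r\ra$.

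\textbf{Step 1: linear estimates for $\L$.} First we prove an energy and localized energy estimate for $\L$, uniformly for small $a$. We multiply by $\partial_t v$ and by a Morawetz multiplier $f(r)\partial_r v$ with $f = r/(r+R)$, dyadically summed. The term $\frac{a\beta}{1+r}(\partial_t+\partial_r+\frac1r)v$ contributes, against $\partial_t v+\partial_r v$, a nonnegative ``good derivative'' term plus errors. Those errors are $O(a)$ times the localized energy quantity $\|\la r\ra^{-1/2-}(\nabla v, v/r)\|_{L^2}$. The potential term $-\frac{a(a+2)\beta}{4(1+r)^2}v$ is controlled by Hardy's inequality with an $O(a)$ constant. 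The terms supported in $\{r\le 2\}$ are lower order and absorbed by the localized energy estimate, since $a$ is small. Commuting with the vector fields $\partial_t$, $\Omega$, and $\partial_x$ (the reason for using $\beta$) produces commutators whose coefficients decay at least like $a\la r\ra^{-2}$, and these are again absorbed. This gives $\|Z^{\le N} v\|_{LE\cap L^\infty E}\lesssim \varepsilon + \|Z^{\le N}\L v\|_{LE^*}$.

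\textbf{Step 2: weighted Strichartz estimate.} Next we transfer a weighted Strichartz estimate from $\Box$ to $\L$. We view $\frac{a\beta}{r}(\partial_t+\partial_r+\frac1r)v$ and the potential as forcing terms and use the known estimate for $\Box$ in $\R^3$ with radial weights. A Sobolev-on-the-sphere argument converts the angular regularity into a weight $r^{\frac{2}{p}-\frac{n}{2}+\dots}$. The key point is to exploit the good derivative $\partial_t+\partial_r$, which enjoys extra decay $\la t+r\ra^{-1}$ over $\partial v$. This lets us treat the perturbation as if it had the dimension shift $2/r\to(2+a)/r$. The resulting estimate is
\[\|r^{\frac{3+a}{2}-\frac{4}{p}-s}\,Z^{\le 2}v\|_{L^p_{t,r}L^2_\omega} \lesssim \varepsilon + \|r^{\frac{3+a}{2}-\frac{4}{p}+s-2}\,|x|^{\,\cdot\,}Z^{\le2}\L v\|_{L^{p/(p-1)}}\]
with $s$ in the range $\frac12-\frac1p<s<\frac12-\frac{1}{p^2}$ appropriate to $n=3+a$. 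The upper restriction $p<\frac{4}{2+a}+1$ is exactly what keeps this interval nonempty and the weight admissible. We then close the nonlinear estimate by the usual Hölder inequality with Sobolev on $\S^2$. Here we use $|e^{\frac a2\phi}F_p(e^{-\frac a2\phi}v)|\lesssim (1+r)^{-\frac{a(p-1)}{2}}|v|^p$, and this gain of $r^{-a(p-1)/2}$ is precisely what converts the $3$-dimensional exponent condition into $p>p_c(3+a)$.

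\textbf{Main obstacle.} We expect the main obstacle to be Step 2: making the first-order perturbation genuinely harmless in the weighted $L^p$ setting. The perturbation only decays like $a/r$, which is borderline, and it can be absorbed only thanks to the good-derivative structure and the $\la t-r\ra$-type gain. We would first try to estimate $(\partial_t+\partial_r)v$ via the identity $r(\partial_t+\partial_r)=\frac{1}{t+r}\bigl(rL+ \dots\bigr)$ near the cone and via Step 1 away from it. This would pass through an interpolation between the $LE$ bound and the $L^2$ endpoint of the weighted Strichartz estimate, which is where the technical restriction $p>\frac73$ (i.e.\ $a\le\frac17$) should enter. Finally, a standard iteration in the norm $\|Z^{\le2}v\|_{LE\cap L^\infty E}+\|\text{weighted }L^p\|$ with $\varepsilon$ small yields the global solution.
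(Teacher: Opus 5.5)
\textbf{The gap: the linear estimate for $\L$.} This is your Step 2, which you yourself flag as the main obstacle, and the mechanism you propose for it cannot work. At spatial infinity, two terms have exactly the scaling of $\Box v$: the first-order term $\frac{a\beta}{1+r}(\partial_t+\partial_r+\frac1r)v=\frac{a\beta}{(1+r)r}(\partial_t+\partial_r)(rv)$, and the attractive inverse-square potential $-\frac{a(a+2)\beta}{4(1+r)^2}v$. So they are not perturbations of $\Box$ in any scale-invariant framework, however small $a$ is.
\begin{itemize}
\item In a weighted Strichartz estimate the left side is $L^p_t$-based and the right side $L^{p'}_t$-based. A time-independent coefficient cannot turn one into the other.
\item In the dual local energy norm the potential contributes $a\sum_R R^{-3/2}\|v\|_{L^2_{t,x}(\{\la r\ra\approx R\})}$. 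The left side only controls $\sup_R R^{-3/2}\|v\|_{L^2_{t,x}(\{\la r\ra\approx R\})}$, so you lose a factor equal to the number of dyadic scales.
\end{itemize}
The extra decay of the good derivative can tame the first-order term only once an $r^p$-type bound for $r^{-1}(\partial_t+\partial_r)(rv)$ is known for solutions of $\L v=F$. Proving that bound is exactly where the potential, which has no good-derivative structure, must be confronted.

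The same criticality undermines your Step 1. The energy multiplier $\partial_t v$ produces the cross term $\frac{a\beta}{1+r}\partial_t v\,(\partial_r+\frac1r)v$. This contains the bad derivative $(\partial_t-\partial_r)v$ with weight exactly $r^{-1}$, so it is not $O(a)$ times any local energy norm. If you let the Morawetz part cancel it, that forces $f\to1$, and the energy density then degenerates to the good derivative. The paper sidesteps this by proving local energy only for the unconjugated $w$, where $\frac{a}{\la r\ra}(\partial_t w)^2\ge 0$. It accepts the loss $\la r\ra^{-a/2}$ in \eqref{energyv} and uses that bound only on $\{r\le 2\}$.

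There are also smaller errors. Your displayed estimate has an unspecified weight. The interval $(\frac12-\frac1p,\frac12-\frac1{p^2})$ is nonempty for every $p>1$, so $p<\frac{4}{2+a}+1$ is not what keeps it nonempty; in the paper that condition is $2s<1$. And $p>\frac73$ does not come from interpolation.

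\textbf{The missing idea: use the signs of these terms inside one multiplier identity.} The paper does not perturb off $\Box$, and it uses no weighted Strichartz estimates at all. It multiplies $\L v$ by $\la r\ra^{2-2s}e^{-\sigma_U(t-r)}(\partial_t+\partial_r+\frac1r)v$, with $s=\frac{3+a}{2}-\frac{2}{p-1}$.
\begin{itemize}
\item The first-order term raises the coefficient of $\iint\beta\la r\ra^{1-2s}[r^{-1}(\partial_t+\partial_r)(rv)]^2$ from $1-s$ to $1-s+a$.
\item After integration by parts, the potential leaves $-\frac{sa(a+2)}{4}\iint\beta\la r\ra^{-1-2s}v^2$, plus boundary and compactly supported terms.
\item These are bounded through the space-time Hardy inequality \eqref{HardyR}. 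Its sharp constant $s^{-2}$ gives the net coefficient $\frac{(2s-a)(2-2s+a)}{4s}$, which is positive exactly when $a<2s$, i.e.\ $p>\frac73$. That is the true source of this restriction.
\end{itemize}
The resulting bound \eqref{rpd} controls only good and angular derivatives, and, via Hardy, $v$ itself in the weighted $L^2$ and $\ell^p_U$ norms. This suffices because the space-time Klainerman--Sobolev bounds of Theorem \ref{thm_ks} use nothing else. No control of the bad derivative away from $\{r\le 2\}$ is ever needed.

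Your heuristic that the factor $\la r\ra^{-a(p-1)/2}$ in the nonlinearity shifts the threshold to $p_c(3+a)$ is correct. It matches the paper's choice of $s$ and $\sigma=\frac12-\frac1p$. But without a linear estimate of this kind your argument does not close.
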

For such small data existence results, the most interesting behavior
occurs for small values of $p$.  For powers exceeding
$\frac{4}{2+a}+1$, there are standard modifications that will yield
global existence once it is understood for the given range.

As an immediate consequence, we see that
\begin{corollary}
For $0<a\le\frac{1}{7}$ and $p>p_c(3+a)$, if $w_{(0)}, w_{(1)}\in
C^\infty_c$ and $\varepsilon>0$ is sufficiently small, then
\eqref{main_equation} admits a global solution $w$.
\end{corollary}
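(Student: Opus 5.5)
The statement to prove is the Corollary: global existence for \eqref{main_equation} when $0<a\le\frac17$ and $p>p_c(3+a)$. The plan is to reduce it to Theorem~\ref{main_thm} via the conjugation \eqref{v}, splitting according to the size of $p$.

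\textbf{Step 1: the range of $p$.} First I check that the hypothesis $\frac73<p$ in Theorem~\ref{main_thm} is automatic when $a\le\frac17$. Since $p_c(n)$ is decreasing in $n$, we have $p_c(3+a)\ge p_c(3+\frac17)=p_c(\frac{22}{7})$. Plugging $p=\frac73$ into $(n-1)p^2-(n+1)p-2$ with $n=\frac{22}7$ gives
\[
\tfrac{15}{7}\cdot\tfrac{49}{9}-\tfrac{29}{7}\cdot\tfrac{7}{3}-2=\tfrac{35}{3}-\tfrac{29}{3}-2=0 .
\]
So $p_c(\frac{22}{7})=\frac73$, and hence $p>p_c(3+a)\ge\frac73$. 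Thus $\max(p_c(3+a),\frac73)=p_c(3+a)$ on this range of $a$. It is also worth checking that the interval in the theorem is nonempty. For $a\le\frac17$ we have $\frac{4}{2+a}+1\ge\frac{28}{15}+1=\frac{43}{15}>\frac73$, so it is.

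\textbf{Step 2: the transfer from $v$ to $w$.} Suppose $p_c(3+a)<p<\frac{4}{2+a}+1$. Set $v_{(j)}=e^{\frac a2\phi}w_{(j)}$. These are $C_c^\infty$, since $\phi$ is smooth; indeed $\phi$ vanishes for $r\le1$, so $v_{(j)}=w_{(j)}$ on the support of the data. The data for $\partial_tv$ transforms the same way because $\phi$ does not depend on $t$. Theorem~\ref{main_thm} then gives a global solution $v$ of \eqref{conjugated}. Define $w=e^{-\frac a2\phi}v$. The computation leading to \eqref{conjugated} is an identity that can be reversed, so $w$ solves \eqref{main_equation} globally with the prescribed data.

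The bound \eqref{vw} shows $|w|\lesssim(1+r)^{-a/2}|v|\lesssim |v|$. Consequently the smallness of $v$ that the proof of the theorem must provide, uniformly in $(t,x)$, carries over to $w$. This keeps $w$ in the regime $|w|\ll1$ where \eqref{Fp} applies, so the nonlinearity is legitimately controlled.

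\textbf{Step 3: large powers.} For $p\ge\frac{4}{2+a}+1$ I would follow the remark after Theorem~\ref{main_thm} and invoke the ``standard modifications''. Concretely, the iteration norms are built from weighted $L^\infty$ and energy-type bounds, and the solution satisfies $|v|\ll1$. Since $|w|^p\le|w|^{q}$ whenever $|w|\le1$ and $p\ge q$, the nonlinearity $F_p$ satisfies the hypothesis \eqref{Fp} with $p$ replaced by any $q$ in the admissible interval, say $q=\frac{4}{2+a}+1-\delta$. Together with the analogous bounds on the derivatives $|w|^j|\partial_w^jF_p|$, this reduces the large-power case to the one already covered by Theorem~\ref{main_thm}.

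\textbf{Main obstacle.} Step 3 is the only genuinely non-routine point. One must make sure the proof of Theorem~\ref{main_thm} uses \eqref{Fp} only through such pointwise bounds, and that the smallness in $L^\infty$ holds throughout the iteration. I expect both to be true given the structure of the argument.
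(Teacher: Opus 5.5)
Your proposal is correct and follows the paper's route. The paper gets the corollary from Theorem~\ref{main_thm} by noting $p_c(3+a)\ge\frac73$ for $a\le\frac17$ and undoing the conjugation \eqref{v}. Your Step~3 makes the paper's unspecified ``standard modifications'' for $p\ge\frac{4}{2+a}+1$ precise. Since \eqref{Fp} with exponent $p$ implies \eqref{Fp} with any smaller exponent on $|w|\le 1$, and Theorem~\ref{main_thm} is stated for every such nonlinearity, you can apply it with a smaller exponent $q$ directly, without reopening its proof. One small fix: to choose $q$ you need $p_c(3+a)<\frac{4}{2+a}+1$, not just $\frac73<\frac{4}{2+a}+1$; this holds because $p_c(3+a)<p_c(3)=1+\sqrt2<\frac{43}{15}$.
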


We note that there are also significant works in the case that the
damping is of the form $\frac{a}{(1+t)^\beta}\partial_t$.  See
e.g. \cite{He-Lai}, \cite{Lai-Takamura}, \cite{Lai-Zhou},
\cite{Dabbicco-Lucente-Reissig}, \cite{Li-Yin}, \cite{Liu-Wang}.  The work most
directly analogous to the current study is \cite{Dabbicco-Lucente-Reissig}.

The proof is related to that of \cite{AMW}.  Rather than relying on
weighted Strichartz estimates such as in the recent studies
\cite{HMSSZ}, \cite{FaWa}, \cite{LMSTW}, and \cite{MW}, we develop a
method that is based solely in (weighted) integrated local energy
estimates.  We employ both the $r^p$-weighting of \cite{DaRo_rp} and
the ghost weighting of \cite{Alinhac_ghost}.  See \cite{M-Rhoads} and
\cite{M-Stewart}.  These bounds effectively handle the ``good derivatives'' that
result upon conjugating, and after applying
space-time Hardy inequalities they give better bounds on the
undifferentiated solution than the standard integrated local energy
bounds.  Unlike \cite{AMW}, one must be careful with the constants in
this proof.  The zeroth order terms in the left side of
\eqref{conjugated} are not necessarily perturbative, even outside of a
compact set.
This is where the $a<\frac{1}{7}$ restriction arises.  To handle the
nonlinear terms in the resulting $L^2_{t,x}$ norm, variants of the space-time
Klainerman Sobolev estimates of \cite{MTT} are called upon.  We, in
particular, use the approach of \cite{AMW} that proves such while
critically, due to the limited regularity of $F_p$, only costs two
vector fields.

\vskip .125in
\subsection{Notation}
We will use the orthogonal decomposition
\[\nabla = \omega\partial_r + \ang,\qquad
  \partial_r=\omega\cdot\nabla.\]
Here and throughout, we denote $r=|x|$ and $\omega = \frac{x}{r}$.  We use $\la r\ra = 1 +r$
(and similarly $\la t-r\ra = 1+t-r$ when $1+t-r>c$ for some $c>0$).
We let $\partial = (\partial_t,\nabla)$ denote the space-time
gradient and reserve $\nabla$ for the spatial gradient.
We will use
\[S=t\partial_t+r\partial_r,\qquad \Omega=x\times\nabla,\qquad Z=(\nabla,\Omega,S)\]
to denote the admissible vector fields on which our proof relies.  By
avoiding the use of the Lorentz boosts, there is a likelihood that the
proof can be adapted to exterior domains or certain asymptotically
flat background geometries.  When vector fields are applied to a
function, we shorten the notation with
\[|AZ^{\le N} w| = \sum_{|\mu|\le N} |AZ^\mu w|,\qquad |A\nabla^{\le N}
  w| = \sum_{|\mu|\le N} |A\nabla^\mu w|,\qquad |A\partial^{\le N} w| =
  \sum_{\mu\le N} |A\partial^\mu w|\]
when $A$ is a vector field.

The decay estimates that we use to close the global existence argument
are variants of those derived in \cite{MTT}.  See also the expanded
presentations in, e.g., \cite{M-Rhoads}, \cite{M-Stewart}, and
\cite{Looi-Tohaneanu}.  And see \cite{AMW} for the formulation used herein.  The exact form of the
bounds depends on the proximity to $r=0$ and to the light cone $t=r$.
Because we are assuming that the Cauchy data are supported in $\{r\le
\frac{1}{2}\}$, the solution $v$ will vanish outside of $C=\{r\le
t+1\}$.  We first dyadically subdivide this region in $t$:
\[C_\tau = C\cap \{\la t\ra\in [\tau, 2\tau]\},\quad C=\bigcup_\tau C_\tau.\]
We next set
\[C_\tau^{R=1}=C_\tau\cap \{r\le 2\},\quad C^R_\tau = C_\tau \cap
  \{r\in [R, 2R]\}\text{ for } R>1,\]
\[C^{U=1}_\tau = C_\tau\cap \{|t-r|\le 2\},\quad C^U_\tau = C_\tau
  \cap \{t-r\in [U, 2U]\}\text{ for } U>1.\]
Including
\[C^{\tau/2}_\tau = C_\tau \cap \{r\ge \tau/2\}\cap \{t-r\ge
  \tau/2\},\]
we have
\[C_\tau = \Bigl(\bigcup_{R\le \tau/4} C^R_\tau\Bigr) \cup \Bigl(\bigcup_{U\le \tau/4}
  C^U_\tau\Bigr)\cup C^{\tau/2}_\tau.\]
The parameters $R, U, \tau$ should always be understood to vary over
dyadic values.
Importantly,
\[\text{on } C^R_\tau:\quad \la r\ra\approx R,\quad t-r\approx t \approx
  \tau,\qquad\text{ and }\qquad\text{ on } C^U_\tau:\quad \la t-r\ra\approx U,\quad r\approx t\approx
  \tau.\]
We also have
\[\text{on } C^{\tau/2}_\tau:\quad t-r\approx r\approx t\approx
  \tau,\]
and we will permit the abuse of notation $C^{\tau/2}_\tau =
C^{R=\tau/2}_\tau$.  We will use $\tC^R_\tau$, $\tC^U_\tau$ to denote slight enlargements
in both parameters, which will be used to capture the tails of cutoffs
that are introduced before applying Sobolev embeddings.

We introduce the notation
\[\|v\|_{\ell^q_\tau \ell^p_{U\le \tau/4} L^2_tL^2_x(C^U_\tau)}
    = \Bigl[\sum_\tau \Bigl(\sum_{1\le U\le \tau/4} \|v\|_{L^2_tL^2_x(C^U_\tau)}^p\Bigr)^{q/p}\Bigr]^{1/q}.\]
We make the obvious modification for when $p$ or $q$ is infinite.  We
may also define the analogous sum for $C^R_\tau$.

\vskip .25in
\section{Local energy estimates}
The principal estimate under consideration in this section is an
integrated local energy estimate that combines the $r^p$-weighting
ideas of \cite{DaRo_rp} and ghost weighting of \cite{Alinhac_ghost}.
See, e.g., \cite{M-Rhoads}, \cite{M-Stewart}.

In order to handle some compactly supported error terms, it is useful
to first record the local energy-type bounds that result from the
damping.  These are more readily proved prior to conjugating.  As
such, we assume that $w$ solves
\begin{equation}
  \label{boxw}
  \begin{cases}
    \Box w + \frac{a}{1+r}\partial_t w = F,\\
    w(0,\cd)=w_{(0)}, \quad \partial_t w(0,\cd)=w_{(1)}
  \end{cases}
\end{equation}
with
\begin{equation}
  \label{suppw}
  \supp\, w_{(0)}, w_{(1)}\subseteq \{|x|\le \frac{1}{2}\}, \quad
  \supp\, F\subseteq \{|x|\le t+\frac{1}{2}\}.
\end{equation}

We first record the standard energy inequality
\begin{equation}\label{energy}\frac{1}{2} \|\partial w(T,\cd)\|_{ L^2}^2 + a\|\la r\ra^{-\frac{1}{2}}
  \partial_t w\|^2_{L^2_{t,x}([0,T]\times\R^3)}
  \le \frac{1}{2} \|\partial w(0,\cd)\|_{L^2}^2 + \int_0^T \int_{\R^3}
  \Bigl|\Bigl(\Box + \frac{a}{1+r}\partial_t\Bigr)w\Bigr||\partial_t
  w|\,dx\,dt,\end{equation}
which holds for any $T$.
We shall use this to derive integrated local energy-type bounds on
$w$.  We apply the lower order correction that appeared in
\cite{Sterbenz}, \cite{MetSo06} to cancel an occurrence of the
Lagrangian in the derivation of integrated local energy estimates.
\begin{lemma}
  Suppose that $w$ solves \eqref{boxw} with $a>0$ and \eqref{suppw}.  Then
  \begin{multline}
    \label{energyw}
    \|r^{-\frac{1}{2}}\la r\ra^{-1}w\|_{\ell^\infty_R \ell^2_\tau L^2_{t,x}(C^R_\tau)} + \|\la
    r\ra^{-\frac{1}{2}}\partial w\|_{\ell^\infty_R\ell^2_\tau L^2_{t,x}(C^R_\tau)}
    \\\lesssim \|\partial w(0,\cd)\|_{L^2} + \|\la
    r\ra^{-1}w(0,\cd)\|_{L^2} + \Bigl\|\la r\ra^{\frac{1}{2}}\Bigl(\Box + \frac{a}{\la
      r\ra}\partial_t\Bigr) w\Bigr\|_{\ell^1_R\ell^2_\tau
      L^2_{t,x}(C^R_\tau)}.
  \end{multline}
\end{lemma}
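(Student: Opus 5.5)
We prove the bound by the Morawetz multiplier method, applied directly to the unconjugated operator $\Box+\frac{a}{\la r\ra}\partial_t$. We pair it with the energy inequality \eqref{energy}, which controls the new damping contributions. Fix a dyadic $R$. We use the multiplier
\[
  \mathcal{M}w = f(r)\partial_r w + \frac{f(r)}{r} w,\qquad f(r)=\frac{r}{r+R},
\]
with the lower order correction $\frac{f}{r}w$ of \cite{Sterbenz}, \cite{MetSo06}. Its role is to cancel the Lagrangian term $f'(|\partial_t w|^2-|\nabla w|^2)$ that the radial part alone would produce. Integrating $(\Box w)\,\mathcal{M}w$ over $[0,T]\times\R^3$ gives the standard positive bulk:
\begin{itemize}
  \item $f'|\partial_r w|^2$ and $\bigl(\frac{f}{r}-f'\bigr)|\ang w|^2$;
  \item the time derivative contribution $\frac{f}{r}|\partial_t w|^2$ mixed appropriately;
  \item the zeroth order term $-\frac12\Delta\bigl(\frac{f}{r}\bigr)|w|^2$. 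With this $f$ it is a positive multiple of $\frac{R}{r(r+R)^3}|w|^2$, plus a positive delta-type contribution at $r=0$ in three dimensions.
\end{itemize}
On $\{r\approx R\}$ these weights are $\approx R^{-1}$ for $\partial_r w$ and $\ang w$, and $\approx R^{-3}$ for $w$. Hence the bulk bounds
\[
  \|\la r\ra^{-\frac12}(\partial_r w,\ang w)\|^2_{L^2(\{r\approx R\})}+\|r^{-\frac12}\la r\ra^{-1}w\|^2_{L^2(\{r\approx R\})}
\]
uniformly in $R$. Summing over $\tau$ for fixed $R$ is automatic, since the estimate is global in time, so this gives the $\ell^2_\tau$ norm. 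The boundary terms at times $0$ and $T$ are bounded by $\|\partial w\|_{L^2}^2+\|\la r\ra^{-1}w\|_{L^2}^2$ via Hardy's inequality, because $|f|\le1$ and $|f/r|\lesssim \la r\ra^{-1}$ near the relevant region. The time-$T$ terms are then controlled by \eqref{energy}.

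The new feature is the damping contribution
\[
  \int \frac{a}{\la r\ra}\partial_t w\Bigl(f\partial_r w+\frac{f}{r}w\Bigr)\,dx\,dt .
\]
By Cauchy--Schwarz it is at most $\delta$ times the bulk plus $C_\delta a\|\la r\ra^{-\frac12}\partial_t w\|^2_{L^2_{t,x}}$, using $f/r\lesssim\la r\ra^{-1}$. The factor $a$ in front of the last norm is exactly the one appearing on the left of \eqref{energy}. So \eqref{energy} absorbs this term and simultaneously yields the missing $\|\la r\ra^{-\frac12}\partial_t w\|$ bound. The main technical obstacle is this absorption step. Once the constants $\delta, C_\delta$ are fixed, it must hold uniformly in $R$ and with no smallness of $a$ required. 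Both properties hold because the damping cost is controlled by the dissipative term in \eqref{energy} rather than by the bulk.

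It remains to treat the forcing $F=(\Box+\frac{a}{\la r\ra}\partial_t)w$, which enters through
\[
  \int |F|\bigl(|\mathcal{M}w|+|\partial_t w|\bigr).
\]
We split dyadically in $R'$ and $\tau$. On $C^{R'}_\tau$ we pair $\la r\ra^{\frac12}F$ against $\la r\ra^{-\frac12}\partial w$ and against $\la r\ra^{-\frac12}\frac{w}{r}$. The latter is dominated by $r^{-\frac12}\la r\ra^{-1}w$ for $r\ge1$, and by the bulk plus Hardy near $r=0$. Applying Cauchy--Schwarz in $\tau$ then in $R'$ gives
\[
  \bigl\|\la r\ra^{\frac12}F\bigr\|_{\ell^1_R\ell^2_\tau L^2}\cdot\bigl(\text{LHS}\bigr).
\]
Since this bound on the left side is independent of the multiplier scale $R$, we take the supremum over $R$, which produces the $\ell^\infty_R$ norm. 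Absorbing, i.e.\ $X^2\lesssim A^2+BX$ implies $X\lesssim A+B$, completes the proof of \eqref{energyw}.
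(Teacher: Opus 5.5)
Your argument breaks at the absorption of the damping cross term $\iint \frac{a}{\la r\ra} f\,\partial_t w\,\partial_r w\,dx\,dt$, where $f=\frac{r}{r+R}$.

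\textbf{Why the absorption fails.} With the correction $\frac{f}{r}w$, the bulk controls $|\partial_r w|^2$ only with weight $\frac12 f'=\frac{R}{2(r+R)^2}$. For $r\gg R$ this is $\approx R/r^2$, while the cross term there has size $\approx \frac{a}{r}|\partial_t w||\partial_r w|$.

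Suppose you put the $\partial_r w$ factor into $\delta$ times the bulk. Cauchy--Schwarz then leaves $\frac{a^2f^2}{2\delta f'\la r\ra^2}|\partial_t w|^2\approx \frac{a^2}{2\delta R}|\partial_t w|^2$ for $r\gg R$. The dissipation $a\|\la r\ra^{-\frac12}\partial_t w\|_{L^2_{t,x}}^2$ in \eqref{energy} does not control this.

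Splitting dyadically in $r$ and using the left side of \eqref{energyw} at each scale does not help either. It produces an $\ell^1_{R'}$ sum of $\|\la r\ra^{-\frac12}\partial_t w\|_{L^2(\{r\approx R'\})}$, whereas the dissipation only gives $\ell^2_{R'}$.

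What you actually need is a global bound on $\|\la r\ra^{-\frac12}\partial_r w\|_{L^2_{t,x}}$. This is an $\ell^2_R$ quantity that is neither in your bulk nor in the $\ell^\infty_R$ left side.

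Changing $f$ alone cannot fix this. Pointwise absorption requires $af^2/\la r\ra\lesssim f'$ for all large $r$, i.e.\ $(1/f)'\lesssim-\la r\ra^{-1}$, which eventually forces $1/f<0$. Two smaller points: the $\frac{f}{r}w$ part of the damping term has the same defect under Cauchy--Schwarz, though it can be written as $\frac{af}{2r\la r\ra}\partial_t(w^2)$ instead. Also, there is no delta mass at $r=0$, since $\frac fr=\frac1{r+R}$ is bounded.

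\textbf{The missing idea.} Local energy decay here comes from the dissipation combined with an equipartition identity, not from a Morawetz estimate. The paper uses only the zeroth-order multiplier $\frac{w}{r+R}$. The damping then contributes the exact time derivative $\frac{a}{2(1+r)(r+R)}\partial_t(w^2)$, and with $F=(\Box+\frac{a}{\la r\ra}\partial_t)w$ one gets
\[
\iint \frac{|\nabla w|^2}{r+R}+\iint\frac{R\,w^2}{r(r+R)^3}=\iint\frac{(\partial_t w)^2}{r+R}+\text{(boundary terms)}+\iint F\,\frac{w}{r+R}.
\]
The only bad term has weight $\frac{1}{r+R}\le\la r\ra^{-1}$ for $R\ge1$. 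It is therefore bounded by $a^{-1}$ times the right side of \eqref{energy}, uniformly in $R$. The negative $(\partial_t w)^2$ bulk, paid for by the dissipation, is exactly what buys the $\la r\ra^{-1}|\nabla w|^2$ control.

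At $t=T$, the $w^2$ boundary term has a favorable sign, and $\int\frac{w\,\partial_t w}{r+R}$ is handled by Hardy's inequality together with \eqref{energy}. Restricting to $r\approx R$ then gives \eqref{energyw}.

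Taking $R=1$ in this identity even supplies the global bound on $\la r\ra^{-\frac12}\nabla w$ that would repair your argument. Once you have it, however, the Morawetz multiplier is superfluous.
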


\begin{proof}
By fixing $R\ge 1$ and applying integration by parts to
  \[\int_0^T\int_{\R^3} \Bigl(\Box + \frac{a}{\la
      r\ra}\partial_t\Bigr) w \cdot \frac{1}{r+R} w\,dx\,dt,\]
  we see that it is equivalent to
  \begin{multline*}
    \int_{\R^3} \frac{1}{r+R} w\partial_t w\,dx\Bigl|_0^T
+\frac{a}{2} \int_{\R^3} \frac{1}{(r+1)(r+R)} w^2\,dx\Bigl|_0^T
    -\int_0^T\int_{\R^3}\frac{1}{r+R}(\partial_tw)^2\,dx\,dt
    \\+ \int_0^T\int_{\R^3} \frac{1}{r+R} |\nabla w|^2\,dx\,dt
    +\int_0^T\int_{\R^3} \frac{R}{r(r+R)^3} w^2\,dx\,dt.
  \end{multline*}
  Thus, by combining with \eqref{energy} and dropping nonnegative
  terms on the $t=T$ slice, we have
  \begin{multline*}
\|r^{-\frac{1}{2}}\la r\ra^{-1}w\|^2_{L^2_{t,x}([0,T]\times\{\la
  r\ra\approx R\})} + \|\la
    r\ra^{-\frac{1}{2}}\partial w\|^2_{L^2_{t,x}([0,T]\times \{\la
      r\ra\approx R\})}
    \\\lesssim \|\partial w(0,\cd)\|^2_{L^2} + \|\la
    r\ra^{-1}w(0,\cd)\|^2_{L^2} + \int_0^T\int_{\R^3} \Bigl|\Bigl(\Box +
    \frac{a}{\la r\ra}\partial_t\Bigr)w\Bigr| \Bigl(|\partial_t w| +
    \la r\ra^{-1} |w|\Bigr)\,dx\,dt.
  \end{multline*}
We may now take supremums in $T, R$.  We also apply the Schwarz
inequality  to bound
\begin{multline*}\int_0^\infty \int_{\R^3}\Bigl|\Bigl(\Box +
    \frac{a}{\la r\ra}\partial_t\Bigr)w\Bigr| \Bigl(|\partial_t w| +
    \la r\ra^{-1} |w|\Bigr)\,dx\,dt
  \\ \lesssim
    \Bigl\|\la r\ra^{\frac{1}{2}}\Bigl(\Box + \frac{a}{\la
      r\ra}\partial_t\Bigr) w\Bigr\|_{\ell^1_R\ell^2_\tau
      L^2_{t,x}(C^R_\tau)}
    \Bigl(\|\la r\ra^{-\frac{1}{2}}\partial
    w\|_{\ell^\infty_R\ell^2_\tau L^2_{t,x}(C^R_\tau)} + \|\la
    r\ra^{-\frac{3}{2}}w\|_{\ell^\infty_R\ell^2_\tau L^2_{t,x}(C^R_\tau)}\Bigr).
  \end{multline*}
  The last factor can be absorbed back into the left, which gives \eqref{energyw}.
\end{proof}

For $(t,x)\in \R_+\times\R^3$, we let $v=e^{\frac{a}{2}\phi(r)}w$, which solves
\begin{equation}
  \label{boxv}
  \begin{cases}
  \L v =e^{\frac{a}{2}\phi}F,\\
  v(0,\cd)=v_{(0)},\quad \partial_t v(0,\cd)=v_{(1)}
\end{cases}
\end{equation}
where we assume
\begin{equation}
  \label{supp}
  \supp\, v_{(0)}, v_{(1)}\subseteq \{|x|\le \frac{1}{2}\}, \quad
  \supp\, F\subseteq \{|x|\le t+\frac{1}{2}\}.
\end{equation}

  Using \eqref{v}, \eqref{vw}, and the fact that
  \[\Bigl(\Box + \frac{a}{\la r\ra} \partial_t\Bigr) w =
    e^{-\frac{a}{2}\phi(r)} \L v,\]
  we may translate \eqref{energyw} into a statement for $v$.
\begin{lemma}  Let $v\in C^2(\R_+\times\R^3)$ solve \eqref{boxv} with
  $a>0$ and \eqref{supp}.  Then, 
  \begin{equation}
      \label{energyv}
    \|r^{-\frac{1}{2}}\la r\ra^{-(1+\frac{a}{2})}v\|_{\ell^\infty_R \ell^2_\tau L^2_{t,x}(C^R_\tau)} + \|\la
    r\ra^{-\frac{1+a}{2}}\partial v\|_{\ell^\infty_R\ell^2_\tau L^2_{t,x}(C^R_\tau)}
    \lesssim \|\partial v(0,\cd)\|_{L^2}\\
    + \Bigl\|\la r\ra^{\frac{1-a}{2}}\L v\Bigr\|_{\ell^1_R\ell^2_\tau
      L^2_{t,x}(C^R_\tau)}.
  \end{equation}
\end{lemma}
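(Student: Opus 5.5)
The plan is to deduce \eqref{energyv} directly from \eqref{energyw} applied to $w=e^{-\frac{a}{2}\phi(r)}v$. The function $w$ solves \eqref{boxw} with $F=e^{-\frac a2\phi}\L v$, and since $e^{\pm\frac a2\phi}$ does not change supports, the support conditions \eqref{suppw} follow from \eqref{supp}. The identity $(\Box+\frac{a}{\la r\ra}\partial_t)w=e^{-\frac a2\phi}\L v$ together with \eqref{vw}, i.e.\ $e^{\frac a2\phi}\approx\la r\ra^{\frac a2}$, shows that the forcing term on the right of \eqref{energyw} is
\[\Bigl\|\la r\ra^{\frac12}e^{-\frac a2\phi}\L v\Bigr\|_{\ell^1_R\ell^2_\tau L^2_{t,x}(C^R_\tau)}\approx\Bigl\|\la r\ra^{\frac{1-a}{2}}\L v\Bigr\|_{\ell^1_R\ell^2_\tau L^2_{t,x}(C^R_\tau)}.\]
Similarly, $r^{-\frac12}\la r\ra^{-1}|w|\approx r^{-\frac12}\la r\ra^{-1-\frac a2}|v|$, which produces the first term on the left of \eqref{energyv}.

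For the derivative term, I will write $\partial v=e^{\frac a2\phi}(\partial w+\frac a2\phi'(r)\omega w)$ in the spatial components, with $\phi'=\beta/(1+r)$, so that
\[|\partial v|\lesssim\la r\ra^{\frac a2}\bigl(|\partial w|+\la r\ra^{-1}|w|\bigr).\]
Multiplying by $\la r\ra^{-\frac{1+a}{2}}$ gives
\[\la r\ra^{-\frac{1+a}{2}}|\partial v|\lesssim\la r\ra^{-\frac12}|\partial w|+\la r\ra^{-\frac32}|w|.\]
The last term is bounded by $r^{-\frac12}\la r\ra^{-1}|w|$ because $\la r\ra^{-\frac12}\le r^{-\frac12}$. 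Both pieces are therefore controlled by the left side of \eqref{energyw}, uniformly in $R$ after taking $\ell^2_\tau$ on each $C^R_\tau$.

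Finally, the initial data terms must be converted. At $t=0$ the data are supported in $\{|x|\le\frac12\}$, where $\beta=0$ and hence $\phi=0$, so $w(0)=v(0)$ and $\partial w(0)=\partial v(0)$. The remaining term $\|\la r\ra^{-1}w(0,\cd)\|_{L^2}$ is handled by Hardy's inequality, $\|r^{-1}v(0)\|_{L^2}\lesssim\|\nabla v(0)\|_{L^2}$, which is valid in three dimensions. This leaves only $\|\partial v(0,\cd)\|_{L^2}$ on the right.

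The only mildly delicate points are this Hardy step and checking that the comparability constants in \eqref{vw} also hold for the inverse weight. Both are routine, so no real obstacle is expected.
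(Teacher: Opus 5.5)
Your proposal is correct and follows the paper's own route: apply \eqref{energyw} to $w=e^{-\frac a2\phi}v$, convert all weights using \eqref{vw}, and remove the data term $\|\la r\ra^{-1}w(0,\cd)\|_{L^2}$ using \eqref{supp} (so $w(0)=v(0)$) and the Hardy inequality \eqref{L2Hardy}. The one thing you add is an explicit treatment of the lower-order term $\frac a2\phi'\omega w$ that appears when $\nabla v$ is expressed through $w$; the paper leaves this implicit, and your handling of it is right.
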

Here we have applied a Hardy inequality
    \begin{equation}\label{L2Hardy}
      \|r^{-1} f\|_{L^2(\R^3)} \lesssim \|\nabla f\|_{L^2(\R^3)}
      \end{equation}
and \eqref{supp} to
simplify the initial data terms.

We are also going to need higher order versions of
\eqref{energyv}.  We first calculate the appropriate
commutators:
\begin{equation}
  \label{Scomm}
  [\L,S]=2\L - \frac{a\beta(r)}{\la r\ra^2}
  \Bigl(\partial_t+\partial_r+\frac{1}{r}\Bigr) +
  \frac{a(a+2)\beta(r)}{2\la r\ra^3}
  + V_{S,0}(r) \partial_t + V_{S,1}(r)\partial_r + V_{S}(r),
\end{equation}
\begin{equation}
  \label{Omegacomm}
  [\L,\Omega]=0,
\end{equation}
\begin{multline}
  \label{dcomm}
  [\L,\nabla_{x_j}] = \omega_j \frac{a\beta(r)}{\la r\ra^2}
  \Bigl(\partial_t+\partial_r+\frac{1}{r}\Bigr)
  - \frac{a\beta(r)}{r\la r\ra} \ang_j +
  \omega_j\frac{a\beta(r)}{r^2\la r\ra}
  -\omega_j\frac{a(a+2)\beta(r)}{2\la r\ra^3}
  \\+ \omega_j V_{\nabla, 0}(r) \partial_t + \omega_j V_{\nabla,1}(r)
  \partial_r + \omega_j V_{\nabla}(r).
\end{multline}
Here $V_{S}, V_{S,0}, V_{S,1}, V_\nabla, V_{\nabla,0}, V_{\nabla,1}$
are smooth functions of $r$.  $V_S, V_\nabla, V_{S,1},$ and $V_{\nabla,1}$ are
  supported in $\{1\le r\le 2\}$, while $V_{S,0}$ and $V_{\nabla,0}$ are
  supported within $\{r\le 2\}$.  The former group may thus be
  regarded as functions in $C^\infty(\R^3)$ while the latter are only
  $C^1(\R^3)$.  Both suffice for our purposes below.
  We must, however, dedicate extra care to the
lack of differentiability of $\omega_j$ at $r=0$ for the $V_{\nabla,
  0}$ term.

\begin{corollary}\label{cor_energyZv}
 Let $v\in C^4(\R_+\times\R^3)$ solve \eqref{boxv} with $a>0$ and
 \eqref{supp}.  Then, for $s<\frac{1}{2}$, we have
 \begin{multline}
   \label{energyZv}
    \|r^{-\frac{1}{2}}\la r\ra^{-(1+\frac{a}{2})} Z^{\le 2} v \|_{\ell^\infty_R
      \ell^2_\tau L^2_{t,x}(C^R_\tau)}
    + \|\la r\ra^{-\frac{1+a}{2}} \partial Z^{\le 2}
    v\|_{\ell^\infty_R\ell^2_\tau L^2_{t,x}(C^R_\tau)}
    \lesssim \|\partial \nabla^{\le 2} v(0,\cd)\|_{L^2} \\+\inf_{F=F_1+F_2}\Bigl(
\|\la r\ra^{\frac{3}{2}-s}Z^{\le 2}F_1\|_{L^2_{t,x}}
    + \|\la r\ra^{1-s}\la
    t-r\ra^{\frac{1}{2}}Z^{\le 2}F_2\|_{\ell^1_U\ell^2_{\tau} L^2_{t,x}(C^U_\tau)}\Bigr)
    .
 \end{multline}
\end{corollary}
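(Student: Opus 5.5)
We apply \eqref{energyv} to $Z^\mu v$ for $|\mu|\le 2$ and control the forcing with the commutator formulas \eqref{Scomm}--\eqref{dcomm}. We proceed inductively in $|\mu|$. Write $\L Z^\mu v = Z^\mu(e^{\frac a2\phi}F) + [\L,Z^\mu]v$. Since $\phi$ is radial and $S\phi$, $\nabla\phi$ are bounded, $Z^{\le 2}e^{\frac a2\phi}$ is comparable to $e^{\frac a2\phi}\approx \la r\ra^{\frac a2}$. Thus the weight $\la r\ra^{\frac{1-a}{2}}$ in \eqref{energyv} becomes $\la r\ra^{\frac12}$ acting on $Z^{\le 2}F$. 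The data term $\|\partial Z^{\le 2}v(0,\cd)\|_{L^2}$ is controlled by $\|\partial\nabla^{\le 2}v(0,\cd)\|_{L^2}$, because the data are supported in $\{r\le\frac12\}$. At $t=0$ we have $S=r\partial_r$ and $\Omega$ has bounded coefficients there, and $\partial_t^2 v(0)$ is expressed through the equation in terms of spatial derivatives plus $F(0)$, which vanishes or is absorbed.

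Next we convert the $\ell^1_R\ell^2_\tau$ norm of $\la r\ra^{\frac12}Z^{\le2}F$ into the two norms on the right of \eqref{energyZv}. For $F_1$, Cauchy--Schwarz over dyadic $R$ gives
\[\sum_R \|\la r\ra^{\frac12}G\|_{\ell^2_\tau L^2(C^R_\tau)}\lesssim \Bigl(\sum_R R^{-2+2s}\Bigr)^{\frac12}\|\la r\ra^{\frac32-s}G\|_{L^2_{t,x}},\]
and this is finite since $s<1$. For $F_2$ we use that on $C^R_\tau$ with $R\le\tau/4$ we have $\la t-r\ra\approx\tau$. The pieces near the cone, $C^U_\tau$ with $U\le\tau/4$, all sit in $R\approx\tau$. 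Summing over $R$ therefore becomes a sum over $\tau$ near the cone, where the weight $\la r\ra^{\frac12}\approx\tau^{\frac12}\la r\ra^{-\frac12+s}\cdot\la r\ra^{1-s}$. Away from the cone, the factor $\la t-r\ra^{\frac12}\approx \tau^{\frac12}\gtrsim R^{\frac12}$ supplies the $R^{-(\frac12-s)}$ gain needed for the $R$ sum; this is where $s<\frac12$ enters. Near the cone, $\ell^1_U$ dominates the $\ell^2$ over $U$ within each $\tau$, and the factor $\tau^{-\frac12+s}$ makes the $\ell^1$-in-$R\approx\tau$ sum converge against $\ell^2_\tau$. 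One should double-check this bookkeeping, but it is routine dyadic summation.

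The main obstacle is the commutator terms. The terms from \eqref{Omegacomm} vanish, and the $2\L$ term in \eqref{Scomm} is just more forcing. The remaining terms have one of three forms: $\frac{a\beta}{\la r\ra^2}(\partial_t+\partial_r+\frac1r)Z^{\le1}v$, $\frac{a\beta}{r\la r\ra}\ang Z^{\le 1}v$, and $\la r\ra^{-3}Z^{\le1}v$, together with compactly supported terms. In \eqref{energyv} they must be placed in $\|\la r\ra^{\frac{1-a}{2}}\cdot\|_{\ell^1_R\ell^2_\tau}$. The first-order terms then carry $\la r\ra^{-\frac32-\frac a2}|\partial Z^{\le1}v|$, which is $R^{-1}$ times the left side at level $\le 1$ in each dyadic shell. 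This is summable in $R$ and bounded by the previous induction step. The zeroth order terms similarly give $R^{-1}$ (or better) times $\|r^{-\frac12}\la r\ra^{-1-\frac a2}Z^{\le1}v\|$. So unlike the sharp estimates later in the paper, no smallness of $a$ is needed here: every commutator term gains an $\la r\ra^{-1}$ and is controlled by the lower order estimate. The compactly supported terms with $V$'s are bounded by $\|Z^{\le1}v\|+\|\partial Z^{\le 1}v\|$ on $\{r\le2\}$, which again is the lower order left side.

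The delicate point is the term $\omega_j V_{\nabla,0}\partial_t$ in \eqref{dcomm} when we commute a second $\nabla$ or $S$. Differentiating $\omega_j$ produces $r^{-1}$ singularities near $r=0$. I would handle this by noting that $\omega_jV_{\nabla,0}(r)\partial_t v$ is locally $C^1$ with at most $r^{-1}$ blow-up of its derivative. The resulting term $r^{-1}\partial_t v$ near the origin is then controlled in $L^2_{t,x}(r\le 2)$ by the Hardy inequality \eqref{L2Hardy} applied to $\partial_t v$. That in turn is bounded by $\|\partial\nabla v\|$ on a compact set, which is first-order-in-$Z$ local energy and already controlled. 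Alternatively, one can use the $r^{-\frac12}$-weighted zeroth order term on the left. Combining the induction steps for $|\mu|=0,1,2$ yields \eqref{energyZv}.
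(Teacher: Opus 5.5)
Your proposal is correct and follows the paper's proof. You apply \eqref{energyv} first to $Z^{\le 1}v$ and then to $Z^{\le 2}v$, and you observe that every commutator term from \eqref{Scomm}--\eqref{dcomm} gains a factor $\la r\ra^{-1}$, so its $\ell^1_R$ norm is dominated by lower-order $\ell^\infty_R$ quantities that are already controlled. You handle the $r^{-1}$ singularity from $\nabla\omega_j$ in the $V_{\nabla,0}$ term via \eqref{L2Hardy} applied to a cutoff of $\partial_t v$. You then pass from $\|\la r\ra^{\frac12}Z^{\le 2}F\|_{\ell^1_R\ell^2_\tau L^2_{t,x}(C^R_\tau)}$ to the $F_1+F_2$ norms by dyadic summation with $s<\frac12$, exactly as in \eqref{energyForcing}.

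The slips are minor. Near the cone the weight should read $\la r\ra^{\frac12}\approx\tau^{-\frac12+s}\la r\ra^{1-s}$. Your suggested alternative using the $r^{-\frac12}$-weighted term on the left cannot control $r^{-1}\partial_t v$ (wrong power of $r$), so keep the Hardy route. Also, $\partial_t^2v(0,\cd)$ never enters $\partial Z^{\le 2}v(0,\cd)$, because every $\partial_t$ inside $S$ carries a factor of $t$.
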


\begin{proof}
  Applying \eqref{energyv} to $Z^{\le 1}v$ and using \eqref{Scomm},
  \eqref{Omegacomm}, and \eqref{dcomm} gives
\begin{multline*}
    \|r^{-\frac{1}{2}}\la r\ra^{-(1+\frac{a}{2})} Z^{\le 1} v \|_{\ell^\infty_R
      \ell^2_\tau L^2_{t,x}(C^R_\tau)}
    + \|\la r\ra^{-\frac{1+a}{2}} \partial Z^{\le 1}
    v\|_{\ell^\infty_R\ell^2_\tau L^2_{t,x}(C^R_\tau)}
    \lesssim \|\partial \nabla^{\le 1} v(0,\cd)\|_{L^2} \\+ \|\la
    r\ra^{\frac{1-a}{2}} Z^{\le 1} \L v\|_{\ell^1_R \ell^2_\tau
      L^2_{t,x}(C^R_\tau)}
    +\|\la r\ra^{-\frac{3+a}{2}} \partial v\|_{\ell^1_R
        \ell^2_\tau L^2_{t,x}(C^R_\tau)}
      + \|\la r\ra^{-\frac{5+a}{2}} 
      v\|_{\ell^1_R\ell^2_\tau L^2_{t,x}(C^R_\tau)}.
    \end{multline*}
        Since
    \begin{multline*}
    \|\la r\ra^{-\frac{3+a}{2}} \partial v\|_{\ell^1_R
        \ell^2_\tau L^2_{t,x}(C^R_\tau)}
      + \|\la r\ra^{-\frac{5+a}{2}} 
      v\|_{\ell^1_R\ell^2_\tau L^2_{t,x}(C^R_\tau)}
\\\lesssim \|\la r\ra^{-\frac{1+a}{2}} \partial v\|_{\ell^\infty_R
        \ell^2_\tau L^2_{t,x}(C^R_\tau)}
      + \|r^{-\frac{1}{2}}\la r\ra^{-(1+\frac{a}{2})}
      v\|_{\ell^\infty_R\ell^2_\tau L^2_{t,x}(C^R_\tau)},
    \end{multline*}
    we may subsequently apply \eqref{energyv} to obtain
\begin{multline}\label{energyZva}
    \|r^{-\frac{1}{2}}\la r\ra^{-(1+\frac{a}{2})} Z^{\le 1} v \|_{\ell^\infty_R
      \ell^2_\tau L^2_{t,x}(C^R_\tau)}
    + \|\la r\ra^{-\frac{1+a}{2}} \partial Z^{\le 1}
    v\|_{\ell^\infty_R\ell^2_\tau L^2_{t,x}(C^R_\tau)}
    \\\lesssim \|\partial \nabla^{\le 1} v(0,\cd)\|_{L^2} + \|\la
    r\ra^{\frac{1-a}{2}} Z^{\le 1} \L v\|_{\ell^1_R \ell^2_\tau
      L^2_{t,x}(C^R_\tau)}.
    \end{multline}

 In turn, we use \eqref{energyZva} on $Z^{\le 1} v$.  Here we must
 take care with the possibility that the vector field within the $\L
 v$ term is a derivative and it lands on the $\omega_j$ within the
 $V_{\nabla,0}$ term of \eqref{dcomm}.  This yields
  \begin{multline*}
    \|r^{-\frac{1}{2}}\la r\ra^{-(1+\frac{a}{2})} Z^{\le 2} v \|_{\ell^\infty_R
      \ell^2_\tau L^2_{t,x}(C^R_\tau)}
    + \|\la r\ra^{-\frac{1+a}{2}} \partial Z^{\le 2}
    v\|_{\ell^\infty_R\ell^2_\tau L^2_{t,x}(C^R_\tau)}
    \lesssim \|\partial \nabla^{\le 2} v(0,\cd)\|_{L^2} \\+ \|\la
    r\ra^{\frac{1-a}{2}} Z^{\le 2} \L v\|_{\ell^1_R \ell^2_\tau
      L^2_{t,x}(C^R_\tau)}
    +\|\la r\ra^{-\frac{3+a}{2}} \partial Z^{\le 1} v\|_{\ell^1_R
        \ell^2_\tau L^2_{t,x}(C^R_\tau)}
   \\   + \|\la r\ra^{-\frac{5+a}{2}} 
      Z^{\le 1}  v\|_{\ell^1_R\ell^2_\tau L^2_{t,x}(C^R_\tau)}
      + \|r^{-1} (1-\beta(r/2)) \partial_t v\|_{L^2_{t,x}}.
    \end{multline*}
   For the last term, we use the Hardy inequality \eqref{L2Hardy}.
We, thus, see that
          \begin{multline*}
    \|\la r\ra^{-\frac{3+a}{2}} \partial Z^{\le 1} v\|_{\ell^1_R
        \ell^2_\tau L^2_{t,x}(C^R_\tau)}
      + \|\la r\ra^{-\frac{5+a}{2}} Z^{\le 1}
      v\|_{\ell^1_R\ell^2_\tau L^2_{t,x}(C^R_\tau)} + \|r^{-1}(1-\beta(r/2))\partial_tv\|_{L^2_{t,x}}
\\\lesssim \|\la r\ra^{-\frac{1+a}{2}} \partial Z^{\le 1}v\|_{\ell^\infty_R
        \ell^2_\tau L^2_{t,x}(C^R_\tau)}
      + \|r^{-\frac{1}{2}}\la r\ra^{-(1+\frac{a}{2})}
      Z^{\le 1} v\|_{\ell^\infty_R\ell^2_\tau L^2_{t,x}(C^R_\tau)},
    \end{multline*}
which allows these error terms to be controlled by \eqref{energyZva}
and gives
\begin{multline}\label{energyZ2va}
    \|r^{-\frac{1}{2}}\la r\ra^{-(1+\frac{a}{2})} Z^{\le 2} v \|_{\ell^\infty_R
      \ell^2_\tau L^2_{t,x}(C^R_\tau)}
    + \|\la r\ra^{-\frac{1+a}{2}} \partial Z^{\le 2}
    v\|_{\ell^\infty_R\ell^2_\tau L^2_{t,x}(C^R_\tau)}
    \\\lesssim \|\partial \nabla^{\le 2} v(0,\cd)\|_{L^2} + \|\la
    r\ra^{\frac{1-a}{2}} Z^{\le 2} \L v\|_{\ell^1_R \ell^2_\tau
      L^2_{t,x}(C^R_\tau)}.
    \end{multline}

We end by modifying the norm of the forcing term to match what will occur in
other estimates later.  For $R\approx \tau$, we break $C^{R\approx \tau}_\tau$ into the
 $U$ regions.  We then notice that
\begin{equation}
  \label{energyForcing}
  \|\la r\ra^{\frac{1}{2}} F\|_{\ell^1_R\ell^2_\tau
    L^2_{t,x}(C^R_\tau)}
  \lesssim \|\la r\ra^{\frac{3}{2}-s}F_1\|_{L^2_{t,x}}
    + \|\la r\ra^{1-s}\la
    t-r\ra^{\frac{1}{2}}F_2\|_{\ell^1_U\ell^2_\tau L^2_{t,x}(C^U_\tau)}
\end{equation}
provided that $F=F_1+F_2$ and that $s<\frac{1}{2}$.  
  \end{proof}

We now turn our attention to the weighted versions of the integrated
local energy estimates.
We start with a space-time Hardy inequality.  This estimate has
previously appeared in, e.g., \cite[Lemma 2.2]{M-Stewart}.  In the
current study, however, the constant will play a role, so we provide
the proof with that added level of care.  We set
$\sigma_\theta(z)=\frac{z}{\theta+|z|}$, which is the function that
will be used in the ghost weights.  

\begin{lemma}
  Let $s>0$.  Suppose $v\in C^2(\R_+\times\R^3)$ and that for every
  $T\in \R_+$ there exists $R>0$ so that $v(t,x)\equiv 0$ on
  $[0,T]\times\{r\ge R\}$.  Then,
  \begin{multline}
    \label{HardyR}
    \|\la r\ra^{-\frac{1}{2}-s} \sqrt{\beta} e^{-\frac{\sigma_U(t-r)}{2}}
    v\|^2_{L^2_{t,x}} + \frac{1}{s}\|\la r\ra^{-s} \sqrt{\beta} e^{-\frac{\sigma_U(t-r)}{2}}
    v\|_{L^\infty_tL^2_x}^2
    \le \frac{1}{s} \|\la r\ra^{-s} \sqrt{\beta} e^{-\frac{\sigma_U(-r)}{2}} v(0,\cd)\|_{L^2}^2   \\+ \frac{1}{s^2}
    \|\la r\ra^{\frac{1}{2}-s}\sqrt{\beta} e^{-\frac{\sigma_U(t-r)}{2}}
    r^{-1}(\partial_t+\partial_r)(rv)\|_{L^2_{t,x}}^2 +
    \frac{1}{s}\|\la r\ra^{-s} \sqrt{\beta'}
    e^{-\frac{\sigma_U(t-r)}{2}} v\|_{L^2_{t,x}}^2 
  \end{multline}
  for any $\beta\in C^1$ that is non-negative and non-decreasing.  In
  particular, it holds for the $\beta$ within \eqref{v} or for
  $\beta\equiv 1$.
\end{lemma}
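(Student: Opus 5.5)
The plan is to prove \eqref{HardyR} with a one-dimensional integration by parts along the outgoing direction $L=\partial_t+\partial_r$, applied to $u=rv$ in polar coordinates. The key observation is that the ghost weight depends only on $t-r$, so
\[L\bigl(e^{-\sigma_U(t-r)}\bigr)=0 .\]
All the weight's $L$-derivative then falls on $\la r\ra^{-2s}\beta$. Set
\[W(t,r)=\la r\ra^{-2s}\beta(r)e^{-\sigma_U(t-r)}.\]
Then
\[LW = -2s\la r\ra^{-2s-1}\beta e^{-\sigma_U(t-r)} + \la r\ra^{-2s}\beta' e^{-\sigma_U(t-r)}.\]
Both terms are signed favorably, since $\beta\ge 0$ and $\beta'\ge0$. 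This is exactly why the hypothesis that $\beta$ is non-negative and non-decreasing is needed, and it also makes $\sqrt{\beta'}$ meaningful.

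The steps are as follows. First, I write $dx=r^2\,dr\,d\omega$, so that $v^2\,dx = u^2\,dr\,d\omega$ and $(r^{-1}Lu)^2\,dx=(Lu)^2\,dr\,d\omega$. Second, I integrate the identity
\[L(Wu^2)=(LW)u^2+2WuLu\]
over $[0,T]\times(0,\infty)\times\S^2$ with respect to $dt\,dr\,d\omega$. The $\partial_t$ part produces the slices $\int Wu^2\,dr\,d\omega$ at $t=T$ and $t=0$. The $\partial_r$ part produces boundary terms at $r=0$ and at $r=\infty$. The first vanishes because $u=rv$ and $v\in C^2$. The second vanishes by the support hypothesis, since $v\equiv0$ for $r\ge R$ on $[0,T]$. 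This gives
\[\int Wu^2\Big|_{t=T} + 2s\iint \la r\ra^{-2s-1}\beta e^{-\sigma_U}u^2 = \int Wu^2\Big|_{t=0} + \iint \la r\ra^{-2s}\beta' e^{-\sigma_U}u^2 + 2\iint WuLu .\]
Third, I bound the cross term by Cauchy--Schwarz in the form
\[2|WuLu|\le s\la r\ra^{-2s-1}\beta e^{-\sigma_U}u^2 + \tfrac1s\la r\ra^{1-2s}\beta e^{-\sigma_U}(Lu)^2 .\]
The first piece is absorbed into the left side, leaving coefficient $s$ on the $L^2_{t,x}$ term. Fourth, I divide by $s$, translate back to $v$ via the first step, and note that at $t=0$ the weight is $e^{-\sigma_U(-r)}$. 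Finally, I take the supremum over $T$; the space-time integral is monotone in $T$, so letting $T\to\infty$ gives the full $L^2_{t,x}$ norm.

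The only delicate point is the bookkeeping of constants, which the paper stresses will matter later. Taking the supremum separately in the slice term and in the space-time term bounds each by the right side of \eqref{HardyR}, so their sum is controlled by at most twice that right side. If the sharp constant $1$ is really required, I would try to get it by rebalancing the Cauchy--Schwarz weights, though it is not clear this succeeds. The alternative is to note that the essential feature is the structure of the constants ($1/s$ and $1/s^2$, uniform in $U$), and a harmless factor $2$ would not affect that.
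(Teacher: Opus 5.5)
Your computation is the paper's: it writes $\la r\ra^{-1-2s}=-\frac{1}{2s}(\partial_t+\partial_r)\la r\ra^{-2s}$ and integrates by parts for $rv$ in polar coordinates, with the ghost weight annihilated by $\partial_t+\partial_r$. It then splits the cross term with Young's inequality exactly as you do. Set
\[
A(T)=\|\la r\ra^{-\frac{1}{2}-s}\sqrt{\beta}\,e^{-\frac{\sigma_U(t-r)}{2}}v\|^2_{L^2([0,T]\times\R^3)},\qquad E(t)=\|\la r\ra^{-s}\sqrt{\beta}\,e^{-\frac{\sigma_U(t-r)}{2}}v(t,\cd)\|^2_{L^2}.
\]
Both arguments show that $A(T)+\frac{1}{s}E(T)$ is bounded by the right side of \eqref{HardyR} for every fixed $T$. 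The paper then simply says ``taking a supremum in $T$'', which is exactly the step you were uneasy about.

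Your unease is justified. No rebalancing of Young's inequality can recover the constant $1$, because \eqref{HardyR} as literally stated is false.

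Take $\beta\equiv 1$ and $v=f(r-t)/r$ with $0\neq f\in C^\infty_c((1,2))$. Then $v$ is smooth, vanishes for $r\ge T+2$ on $[0,T]$, and satisfies $(\partial_t+\partial_r)(rv)=0$. So the right side of \eqref{HardyR} is $\frac{1}{s}E(0)$. Moreover
\[
E(t)=4\pi\int (1+\rho+t)^{-2s}e^{-\sigma_U(-\rho)}f(\rho)^2\,d\rho
\]
decreases to $0$, so $\sup_t E(t)=E(0)$. Your identity reduces to $E(T)+2sA(T)=E(0)$, giving $A(\infty)=\frac{1}{2s}E(0)$. The left side of \eqref{HardyR} is therefore $\frac{3}{2s}E(0)$, which exceeds the right side.

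The correct statements are the fixed-$T$ inequality with constant $1$, or \eqref{HardyR} with a factor $2$ on the right; the latter is what you proved.

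Nothing downstream is affected. The only place the constant matters is the passage from \eqref{rp3} to \eqref{rp4}. There one bounds $\frac{a(a+2)}{8}E(T)+\frac{sa(a+2)}{4}A(T)\le\frac{sa(a+2)}{4}\bigl(A(T)+\frac{1}{s}E(T)\bigr)$ at a fixed $T$. The fixed-$T$ inequality then yields precisely the coefficient $(1-s+a)-\frac{a(a+2)}{4s}=\frac{(2s-a)(2-2s+a)}{4s}$. The supremum in $T$ is taken only afterwards, where a factor $2$ is harmless, and the uses in Corollary \ref{cor_rp} are only up to implicit constants.
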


\begin{proof}
  By writing
  \[\int_0^T\int_{\R^3} \beta(r) (1+r)^{-1-2s} e^{-\sigma_U(t-r)} v^2\,dx\,dt
    = -\frac{1}{2s}\int_0^T\int_{\S^2}\int_0^\infty \beta(r) e^{-\sigma_U(t-r)}
    (\partial_t+\partial_r)(1+r)^{-2s} (rv)^2\,dr\,d\omega\,dt,\]
  we may integrate by parts to see that this is equivalent to
  \begin{multline*}-\frac{1}{2s} \int_{\R^3} \beta(r) (1+r)^{-2s} e^{-\sigma_U(t-r)} v^2\,dx\Bigl|_{t=0}^T
+\frac{1}{2s} \int_{\R^3} \beta'(r) (1+r)^{-2s} e^{-\sigma_U(t-r)} v^2\,dx\,dt
\\    + \frac{1}{s} \int_0^T\int_{\S^2}\int_0^\infty \beta(r)(1+r)^{-2s}e^{-\sigma_U(t-r)}
    (rv)(\partial_t+\partial_r)(rv)\,dr\,d\omega\,dt
    .\end{multline*}
 By the Schwarz inequality and Young's inequality, the last term is
 bounded by
 \[\frac{1}{2}\int_0^T\int_{\R^3} \beta(r)(1+r)^{-1-2s}e^{-\sigma_U(t-r)} v^2\,dx\,dt
   + \frac{1}{2s^2}\int_0^T\int_{\R^3}\beta(r)(1+r)^{1-2s}e^{-\sigma_U(t-r)}
   [r^{-1}(\partial_t+\partial_r)(rv)]^2\,dx\,dt.\]
 Applying this bound and taking a supremum in $T$ gives
 \begin{multline*}\frac{1}{2}\|\la r\ra^{-\frac{1}{2}-s} \sqrt{\beta} e^{-\frac{\sigma_U(t-r)}{2}} v\|_{L^2_{t,x}}^2
   +\frac{1}{2s} \|\la r\ra^{-s} \sqrt{\beta} e^{-\frac{\sigma_U(t-r)}{2}} v\|^2_{L^\infty_tL^2_x}
   \le \frac{1}{2s} \|\la r\ra^{-s}\sqrt{\beta} e^{-\frac{\sigma_U(-r)}{2}} v(0,\cd)\|^2_{L^2} \\+
   \frac{1}{2s^2} \|\la r\ra^{\frac{1}{2}-s}\sqrt{\beta} 
   e^{-\frac{\sigma_U(t-r)}{2}}
   r^{-1}(\partial_t+\partial_r)(rv)\|^2_{L^2_{t,x}} + \frac{1}{2s}
   \|\la r\ra^{-s} \sqrt{\beta'}e^{-\frac{\sigma_U(t-r)}{2}} v\|_{L^2_{t,x}}^2,
 \end{multline*}
 which is equivalent to \eqref{HardyR}. 
\end{proof}

We will need a related Hardy inequality on the $C^U_\tau$ regions.  We
record the following, which was proved in \cite[Lemma 3.7]{AMW}.
\begin{lemma}
  Let $0\le \sigma< \min(s,1/2)$.  Suppose $v\in C^2(\R_+\times\R^3)$
  has support contained in $\{r\le t+\frac{1}{2}\}$.  Then
  \begin{multline}
    \label{HardyU}
    \|\la r\ra^{-\frac{1}{2}-s+\sigma} \la t-r\ra^{-\sigma}
    v\|_{\ell^{\frac{2}{1-2\sigma}}_U \ell^2_\tau L^2_{t,x}(C^U_\tau)} \lesssim
      \|v(0,\cd)\|_{L^2}
      \\+ \|\la r\ra^{\frac{1}{2}-s}
      r^{-1}(\partial_t+\partial_r)(rv)\|^{1-2\sigma}_{L^2_{t,x}}
      \|\la r\ra^{1-s} \la t-r\ra^{-\frac{1}{2}}
      r^{-1}(\partial_t+\partial_r)(rv)\|^{2\sigma}_{\ell^\infty_U
        \ell^2_\tau L^2_{t,x}(C^U_\tau)}.
  \end{multline}
\end{lemma}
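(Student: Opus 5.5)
The plan is to prove \eqref{HardyU} by combining a weighted one-dimensional Hardy inequality along the outgoing direction with a dyadic interpolation in $U$. Write $\psi = rv$. I first establish two endpoint bounds; then the $\sigma$-dependent estimate follows from H\"older's inequality on each $C^U_\tau$.

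\emph{Endpoint $\sigma=0$.} Apply \eqref{HardyR} with $\beta\equiv 1$ and $U=1$. Since $e^{-\sigma_1(t-r)}$ is bounded above and below by positive constants, this gives
\[\|\la r\ra^{-\frac12-s}v\|_{L^2_{t,x}}\lesssim \|v(0,\cd)\|_{L^2}+\|\la r\ra^{\frac12-s}r^{-1}(\partial_t+\partial_r)(rv)\|_{L^2_{t,x}}.\]
The support hypothesis $\{r\le t+\frac12\}$ supplies the vanishing for large $r$ that \eqref{HardyR} requires. Summing over the disjoint regions $C^U_\tau$ gives the $\ell^2_U\ell^2_\tau$ bound, which is the case $\sigma=0$.

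\emph{Endpoint $\sigma=\frac12$.} The aim is
\[\|\la r\ra^{-s}\la t-r\ra^{-\frac12}v\|_{\ell^\infty_U\ell^2_\tau L^2_{t,x}(C^U_\tau)}\lesssim \|v(0,\cd)\|_{L^2}+\|\la r\ra^{1-s}\la t-r\ra^{-\frac12}r^{-1}(\partial_t+\partial_r)(rv)\|_{\ell^\infty_U\ell^2_\tau L^2(C^U_\tau)},\]
possibly with an additional $L^2$ term of the first type on the right. To prove it, integrate along the outgoing rays $t-r=\text{const}$ in the null coordinate $u=t-r$. For fixed $u\approx U$ and $\omega$, the function $\psi$ restricted to the ray satisfies $\partial_v\psi=(\partial_t+\partial_r)\psi$. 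The ray starts either at $t=0$ or at $r=0$ (where $\psi=0$), so I write $\psi$ as its initial value plus the integral of $(\partial_t+\partial_r)\psi$. On $C^U_\tau$ we have $r\approx\tau$, and the Cauchy--Schwarz inequality in the ray parameter over dyadic pieces of length $\approx\tau$ yields the bound. The weight $\la r\ra^{-s}$ with $s>0$ makes the dyadic sum over $\tau$ along each ray converge, either geometrically or by a Hardy-type argument in $\la r\ra$. Here the $\la t-r\ra^{-\frac12}$ weight is constant on each $U$ slab, so it simply passes through the argument.

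\emph{Interpolation.} On each $C^U_\tau$, I use H\"older's inequality in the form
\[\|\la r\ra^{-\frac12-s+\sigma}\la t-r\ra^{-\sigma}v\|\le \|\la r\ra^{-\frac12-s}v\|^{1-2\sigma}\,\|\la r\ra^{-s}\la t-r\ra^{-\frac12}v\|^{2\sigma},\]
with all norms taken in $L^2(C^U_\tau)$. Taking $\ell^2_\tau$ and then $\ell^{2/(1-2\sigma)}_U$ norms and applying H\"older in the sequence spaces (with exponents $\ell^2$ and $\ell^\infty$) gives the left side of \eqref{HardyU} bounded by the $\sigma=0$ quantity to the power $1-2\sigma$ times the $\sigma=\frac12$ quantity to the power $2\sigma$. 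Substituting the two endpoint bounds gives \eqref{HardyU}, after distributing the data term using $x^{1-2\sigma}y^{2\sigma}\le x+y$.

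The condition $\sigma<\min(s,\frac12)$ is needed in the $\sigma=\frac12$ step: with only $\sigma$-fractional control, the sum in $\tau$ along a ray must converge with the residual weight $\la r\ra^{\sigma-s}$. I expect this to be the main obstacle. The difficulty is making the ray integration and the dyadic summation rigorous, in particular handling rays emanating from $r=0$ for $U$ large and ensuring no logarithmic loss in $\tau$. If a direct endpoint at $\sigma=\frac12$ fails, I would instead prove the intermediate bound for each fixed $\sigma<s$ directly by the same ray argument, using the weight $\la r\ra^{-2(s-\sigma)}$ as the integrating factor, in analogy with the proof of \eqref{HardyR}.
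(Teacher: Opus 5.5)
\textbf{There is a genuine gap.} The $\sigma=\frac12$ endpoint you plan to interpolate against is false whenever $s\le\frac12$. That is exactly the regime $2s<1$ in which the paper uses this lemma, so interpolating on the undifferentiated side cannot work.

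Take $v$ a nontrivial radial solution of $\Box v=0$ with data supported in $\{r\le\frac12\}$; the lemma is a pure inequality, so this is an admissible test function. For $t>1$ one has $rv=H(t-r)$. Hence $(\partial_t+\partial_r)(rv)$ vanishes for $t>1$, and every term on your right side is finite, even with the extra $L^2$ term you allow. On the other hand,
\[\|\la r\ra^{-s}\la t-r\ra^{-\frac12}v\|^2_{L^2_{t,x}(C^{U=1}_\tau)}\approx \tau^{1-2s}\|H\|^2_{L^2},\]
which is not summable in $\tau$.

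Your ray argument shows why. Cauchy--Schwarz only gives $|rv|$ bounded along a ray, and a dyadic segment of length $\approx\tau$ carries weight $\tau\cdot\tau^{-2s}$, which grows. So the weight $\la r\ra^{-s}$ does not make the $\tau$-sum converge. In integration-by-parts terms,
\[\la r\ra^{-1-2s+2\sigma}=-\tfrac{1}{2(s-\sigma)}(\partial_t+\partial_r)\la r\ra^{-2(s-\sigma)},\]
and the resulting boundary term on $\{t=T\}$ has a sign that lets you discard it only when $\sigma<s$. At $\sigma=\frac12$ this fails. That, and not convergence in your endpoint step, is the actual role of the hypothesis $\sigma<s$.

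\textbf{The fix.} Your fallback is the right argument and is what the paper (via \cite{AMW}) does, but the H\"older interpolation has to move to the derivative side.
\begin{enumerate}
\item Fix $U$ and insert an indicator of $\{\la t-r\ra\approx U\}$ together with the factor $\la t-r\ra^{-2\sigma}$. Both are functions of $t-r$, so $\partial_t+\partial_r$ annihilates them.
\item Integrate by parts against $(rv)^2$ exactly as in the proof of \eqref{HardyR}. The $r=0$ boundary term vanishes, the $t=T$ term is dropped, and the $t=0$ term occurs only for $U=1$.
\item Absorb the cross term. For each $U$, this bounds the $\la r\ra^{-\frac12-s+\sigma}\la t-r\ra^{-\sigma}$-weighted norm of $v$ on that slab by $\|v(0,\cd)\|_{L^2}$ plus $(s-\sigma)^{-1}\|\la r\ra^{\frac12-s+\sigma}\la t-r\ra^{-\sigma}r^{-1}(\partial_t+\partial_r)(rv)\|$ on the same slab.
\item Take the $\ell^{2/(1-2\sigma)}_U$ norm and apply your exponent splitting to this derivative term. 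That is \eqref{HolderU}, and it produces the product on the right of \eqref{HardyU}.
\end{enumerate}
Your $\sigma=0$ step and your H\"older bookkeeping are both correct. The H\"older step simply has to come after the Hardy inequality, applied to the derivative, rather than before it, applied to $v$.
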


Here one writes
\[(1+r)^{-1-2s+2\sigma}=-\frac{1}{2s-2\sigma} (\partial_t+\partial_r)(1+r)^{-2s+2\sigma},\]
integrates by parts, and argues much like in the preceding lemma.  An indicator function that localizes to $\la
t-r\ra\approx U$ is included.  For the resulting term where $v$ is
differentiated, H\"older's inequality is applied, which gives
\begin{multline}
  \label{HolderU}
  \|\la r\ra^{\frac{1}{2}-s+\sigma}\la t-r\ra^{-\sigma}
  r^{-1}(\partial_t+\partial_r)(rv)\|_{\ell_U^{\frac{2}{1-2\sigma}}
    \ell^2_\tau L^2_{t,x}(C^U_\tau)}
  \\\lesssim \|\la r\ra^{\frac{1}{2}-s}
  r^{-1}(\partial_t+\partial_r)(rv)\|^{1-2\sigma}_{L^2_{t,x}}
  \|\la r\ra^{1-s} \la
  t-r\ra^{-\frac{1}{2}}r^{-1}(\partial_t+\partial_r)(rv)\|^{2\sigma}_{\ell^\infty_U
    \ell^2_\tau L^2_{t,x}(C^U_\tau)}.
\end{multline}

We now prove the main integrated local energy estimate for $v$.  This
uses both $r^p$-weights \cite{DaRo_rp} and ghost weights
\cite{Alinhac_ghost}.  While this estimate has appeared previously in
\cite{M-Rhoads} and \cite{M-Stewart}, we reprove it here as it is
necessary to carefully track the constants in order to control the
zeroth order term in \eqref{boxv}.

\begin{proposition} Suppose $a, s$ satisfy
$0<a<2s<1$.  Then for $v\in C^2(\R_+\times\R^3)$ solving
\eqref{boxv} subject to \eqref{supp}, we have
\begin{multline}
  \label{rpd}
 \|\la r\ra^{1-s}
 r^{-1}(\partial_t+\partial_r)(rv)\|_{L^\infty_tL^2_x} + \|\la
 r\ra^{1-s} \ang v\|_{L^\infty_tL^2_x}
 + \|\la r\ra^{\frac{1}{2}-s}
 r^{-1}(\partial_t+\partial_r)(rv)\|_{L^2_{t,x}}
 \\+ \|\la r\ra^{1-s}\la t-r\ra^{-\frac{1}{2}}
 r^{-1}(\partial_t+\partial_r)(rv)\|_{\ell^\infty_U\ell^2_\tau L^2_{t,x}(C^U_\tau)}
 +\|\la r\ra^{\frac{1}{2}-s} \ang v\|_{L^2_{t,x}}
 \\\lesssim \|\partial^{\le 1} v(0,\cd)\|_{L^2} + \inf_{F=F_1+F_2}\Bigl(\|\la
 r\ra^{\frac{3}{2}-s+\frac{a}{2}} F_1\|_{L^2_{t,x}}
 + \|\la r\ra^{1-s+\frac{a}{2}}\la t-r\ra^{\frac{1}{2}} F_2\|_{\ell^1_U
   \ell^2_{\tau} L^2_{t,x}(C^U_\tau)}\Bigr).
\end{multline}
\end{proposition}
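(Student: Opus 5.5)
We multiply $\L v$ by the multiplier
\[
\la r\ra^{2-2s}\, e^{-\sigma_U(t-r)}\, \beta(r)\, r^{-1}(\partial_t+\partial_r)(rv),
\]
where $U$ is a dyadic parameter. We integrate over $[0,T]\times\R^3$ in polar coordinates, writing $\psi=rv$, so that
\[
r\Box v = (\partial_t-\partial_r)(\partial_t+\partial_r)\psi - r^{-1}\Delta_\omega \psi .
\]

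\textbf{Positive terms from the free part.} Integration by parts in $(t,r)$ and on $\S^2$ produces:
\begin{itemize}
\item the boundary energies $\|\la r\ra^{1-s}r^{-1}(\partial_t+\partial_r)(rv)\|_{L^\infty_tL^2_x}$ and $\|\la r\ra^{1-s}\ang v\|_{L^\infty_tL^2_x}$;
\item from $(\partial_t-\partial_r)$ falling on the weight $\la r\ra^{2-2s}$, a bulk term $(1-s)\int \la r\ra^{1-2s}[r^{-1}(\partial_t+\partial_r)\psi]^2$;
\item from the angular part, a bulk term whose coefficient is $-\tfrac12\partial_r(\la r\ra^{2-2s}r^{-2})r^2\approx s\,\la r\ra^{1-2s}$, which gives $\|\la r\ra^{\frac12-s}\ang v\|^2_{L^2}$ (this is where $s>0$ is used);
\item from $(\partial_t-\partial_r)$ falling on $e^{-\sigma_U(t-r)}$, the factor $2\sigma_U'(t-r)\approx U^{-1}$ on $\{\la t-r\ra\approx U\}$, which gives the $\ell^\infty_U$ ghost-weight term after taking the supremum over $U$. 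Since $e^{-\sigma_U}\approx 1$ uniformly, the weight is harmless elsewhere.
\end{itemize}

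\textbf{The $\beta$ localization.} Where $\beta'$ hits, the terms are supported in $\{1\le r\le 2\}$. We bound these by the conjugated version \eqref{energyv} of the damping estimate. That estimate also handles the lower-order terms of $\L$ that are supported in $r\le 2$. Its forcing norm $\|\la r\ra^{\frac{1-a}{2}}\L v\|_{\ell^1_R\ell^2_\tau}$ is dominated by the right side of \eqref{rpd} via \eqref{energyForcing}, since $\frac{1-a}{2}\le \frac12+\frac a2$ there.

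\textbf{The damping term.} Next we treat $\frac{a\beta}{\la r\ra}(\partial_t+\partial_r+\frac1r)v=\frac{a\beta}{\la r\ra} r^{-1}(\partial_t+\partial_r)\psi$. Paired with the multiplier, it gives a \emph{nonnegative} contribution $a\int\beta\la r\ra^{1-2s}e^{-\sigma_U}[r^{-1}(\partial_t+\partial_r)\psi]^2$, which we simply keep.

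\textbf{The zeroth order term (main obstacle).} The dangerous piece is
\[
-\frac{a(a+2)\beta}{4\la r\ra^2}v \cdot \la r\ra^{2-2s}e^{-\sigma_U}\beta\, r^{-1}(\partial_t+\partial_r)(rv).
\]
It is not perturbative, so constants matter. One could write it as $-\frac{a(a+2)}{8}\int \beta^2\la r\ra^{-2s}e^{-\sigma_U}(\partial_t+\partial_r)\psi^2\,dr$ and integrate by parts; the sign of the resulting bulk term ($\partial_r\la r\ra^{-2s}<0$, together with the ghost factor) must be checked. Instead, we propose a Cauchy--Schwarz argument combined with the Hardy inequality \eqref{HardyR}. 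That inequality bounds $\|\la r\ra^{-\frac12-s}\sqrt\beta e^{-\sigma_U/2}v\|_{L^2}$ by $\frac1s\|\la r\ra^{\frac12-s}\sqrt\beta e^{-\sigma_U/2}r^{-1}(\partial_t+\partial_r)(rv)\|_{L^2}$ plus data and $\beta'$ terms. The zeroth order term is therefore at most $\frac{a(a+2)}{4s}\|\la r\ra^{\frac12-s}\dots\|^2$.

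We must absorb this against the good coefficient $(1-s)+a$ (or just $1-s$). The hypothesis $a<2s<1$ is designed so that $\frac{a(a+2)}{4s}<\frac{a+2}{2}$ fits within the available positivity. Precisely tracking these constants, possibly sharpening via the boundary term in \eqref{HardyR}, is the step I expect to be hardest.

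\textbf{Remaining terms and conclusion.} The cross term $\beta\beta'$ and the data terms are handled as above. For the forcing $e^{\frac a2\phi}F\approx\la r\ra^{a/2}F$, we split $F=F_1+F_2$:
\begin{itemize}
\item $F_1$: pair by Cauchy--Schwarz against $\|\la r\ra^{\frac12-s}r^{-1}(\partial_t+\partial_r)\psi\|_{L^2}$;
\item $F_2$: on $C^U_\tau$, pair against the ghost-weight term, using $\ell^1_U$--$\ell^\infty_U$ duality.
\end{itemize}
Absorbing these into the left side yields \eqref{rpd}.
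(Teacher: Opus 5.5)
Your overall plan matches the paper's. You use the multiplier $\la r\ra^{2-2s}e^{-\sigma_U(t-r)}r^{-1}(\partial_t+\partial_r)(rv)$, keep the damping term as a positive bulk term, and handle the zeroth order term with \eqref{HardyR}. Your Cauchy--Schwarz route even gives the same constant $\frac{a(a+2)}{4s}$ that the paper obtains by first integrating the zeroth order term by parts and then using both the bulk and the $L^\infty_tL^2_x$ parts of \eqref{HardyR}.

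\textbf{The gap: the decisive absorption step.} This step is the whole content of the proposition, and as written it is not justified. The positivity available in front of $\int\beta\la r\ra^{1-2s}[r^{-1}(\partial_t+\partial_r)(rv)]^2\,dx\,dt$ is exactly $1-s+a$. Since $\frac{a+2}{2}-(1-s+a)=s-\frac{a}{2}>0$ under your hypothesis $a<2s$, showing $\frac{a(a+2)}{4s}<\frac{a+2}{2}$ proves nothing. Your fallback ``or just $1-s$'' fails outright: for $s=0.3$, $a=0.5$ one has $\frac{a(a+2)}{4s}\approx 1.04>0.7$. What is needed is the identity
\[
(1-s+a)-\frac{a(a+2)}{4s}=\frac{(2s-a)(2-2s+a)}{4s},
\]
which is the coefficient appearing in \eqref{rp4}. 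It is positive precisely because $a<2s$ and $2s<2+a$. So the damping contribution $a\int\beta\la r\ra^{1-2s}[\cdots]^2$ is indispensable, not merely something to keep, and $a<2s$ is exactly the threshold at which this argument closes.

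\textbf{A secondary issue: the region $\{r<1\}$.} By inserting $\beta$ into the multiplier you get no information at all on $\{r<1\}$. The left side of \eqref{rpd} there, including both $L^\infty_tL^2_x$ terms, must then be recovered separately, and you do not address this. One way is to use \eqref{energy}, keeping the $\|\partial w(T,\cd)\|_{L^2}^2$ term that the proof of \eqref{energyw} discards and noting $v=w$ for $r\le 1$. Combine this with \eqref{L2Hardy} applied to a cutoff of $v$. The paper sidesteps the issue by not cutting off the multiplier. At $r=0$ its integration by parts yields only the nonnegative term $\frac12\int_0^T\int_{\S^2}e^{-\sigma_U(t)}v(t,0)^2\,d\omega\,dt$. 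Only the lower order pieces of $\L$ supported in $\{r\le 2\}$ are then sent to \eqref{energyZv}.
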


\begin{proof}
We examine
\[  \int_0^T\int_{\R^3} \la r\ra^{2-2s} e^{-\sigma_U(t-r)}\Bigl(\Box v +
  \frac{a}{\la r\ra}\beta(r)\Bigl(\partial_t+\partial_r+\frac{1}{r}\Bigr)v -
  \frac{a(a+2)}{4}\beta(r)\frac{1}{\la r\ra^2}v\Bigr)
  \Bigl(\partial_t+\partial_r+\frac{1}{r}\Bigr)v\,dx\,dt.\]
Expanding into spherical coordinates and conjugating, this is
equivalent to
\begin{equation}\label{rp1}
 \int_0^T\int_{\S^2}\int_0^\infty
  \la r\ra^{2-2s}e^{-\sigma_U(t-r)}\Bigl(\partial_t^2-\partial_r^2-\ang\cdot\ang +
  \frac{a}{\la r\ra}\beta(r)(\partial_t+\partial_r)-\frac{a(a+2)}{4}\beta(r)\frac{1}{\la
    r\ra^2}\Bigr)(rv) (\partial_t+\partial_r)(rv)\,dr\,d\omega\,dt. 
\end{equation}
Applying the chain rule, integration by parts, and the fact that
\[[\ang,\partial_r]=\frac{1}{r}\ang,\]
\eqref{rp1} is the same as
\begin{multline}
  \label{rp2}
 \frac{1}{2}\int_0^T\int_{\S^2}\int_0^\infty \la r\ra^{2-2s}e^{-\sigma_U(t-r)}
 (\partial_t-\partial_r)[(\partial_t+\partial_r)(rv)]^2\,dr\,d\omega\,dt
\\ +\int_0^T\int_{\S^2}\int_0^\infty \la r\ra^{2-2s} e^{-\sigma_U(t-r)} \frac{1}{r}
 |\ang(rv)|^2\,dr\,d\omega\,dt
 \\+\frac{1}{2} \int_0^T\int_{\S^2}\int_0^\infty \la r\ra^{2-2s}e^{-\sigma_U(t-r)}
 (\partial_t+\partial_r)|\ang(rv)|^2\,dr\,d\omega\,dt
\\ +a \int_0^T\int_{\S^2}\int_0^\infty \beta(r)\la r\ra^{1-2s}e^{-\sigma_U(t-r)}
 [(\partial_t+\partial_r)(rv)]^2\,dr\,d\omega\,dt
 \\-\frac{a(a+2)}{8} \int_0^T\int_{\S^2}\int_0^\infty \beta(r)\la r\ra^{-2s}e^{-\sigma_U(t-r)}(\partial_t+\partial_r)(rv)^2\,dr\,d\omega\,dt.
\end{multline}
Integration by parts gives that the above is equal to
\begin{multline}
  \label{rp3}
  \frac{1}{2}\int_{\R^3} \la r\ra^{2-2s} e^{-\sigma_U(t-r)}\Bigl\{
  [r^{-1}(\partial_t+\partial_r)(rv)]^2+|\ang v|^2\Bigr\}\,dx\Bigl|_{t=0}^T
  +\frac{1}{2}\int_0^T\int_{\S^2} e^{-\sigma_U(t)}
  (v(t,0))^2\,d\omega\,dt
  \\+(1-s)\int_0^T\int_{\R^3}(1-\beta)(r) \la r\ra^{1-2s}e^{-\sigma_U(t-r)}
  [r^{-1}(\partial_t+\partial_r)(rv)]^2\,dx\,dt
  \\+(1-s+a) \int_0^T\int_{\R^3} \beta(r)\la r\ra^{1-2s}e^{-\sigma_U(t-r)}
  [r^{-1}(\partial_t+\partial_r)(rv)]^2\,dx\,dt
  \\+\int_0^T\int_{\R^3} \la r\ra^{2-2s} \sigma_U'(t-r)
  e^{-\sigma_U(t-r)} [r^{-1}(\partial_t+\partial_r)(rv)]^2\,dx\,dt
  \\+\int_0^T\int_{\R^3} \la r\ra^{2-2s} \Bigl(\frac{1}{r} -
  \frac{1-s}{\la r\ra}\Bigr)  e^{-\sigma_U(t-r)} |\ang v|^2\,dx\,dt
   \\+\frac{a(a+2)}{8}\int_0^T\int_{\R^3} \beta'(r)\la r\ra^{-2s}e^{-\sigma_U(t-r)} v^2\,dx\,dt\\
  -\frac{a(a+2)}{8} \int_{\R^3} \beta(r) \la r\ra^{-2s}  e^{-\sigma_U(t-r)} v^2\,dx\Bigl|_{t=0}^T
  -\frac{sa(a+2)}{4} \int_0^T\int_{\R^3} \beta(r)\la r\ra^{-1-2s}
  e^{-\sigma_U(t-r)} v^2\,dx\,dt.
\end{multline}
Taking a supremum in $T$ and using \eqref{HardyR} to bound the last
two terms in \eqref{rp3}, we get
\begin{multline}
  \label{rp4}
  \frac{1}{2} \sup_{t} \int_{\R^3} \la r\ra^{2-2s} e^{-\sigma_U(t-r)}
  \Bigl\{[r^{-1}(\partial_t+\partial_r)(rv)(t,x)]^2+ |\ang
  v(t,x)|^2\Bigr\}\,dx
  \\+ \frac{(2s-a)(2-2s+a)}{4s} \int_0^\infty\int_{\R^3} \beta(r) \la r\ra^{1-2s} e^{-\sigma_U(t-r)}
  [r^{-1}(\partial_t+\partial_r)(rv)]^2\,dx\,dt
  \\+(1-s)\int_0^\infty \int_{\R^3} (1-\beta)(r) \la
  r\ra^{1-2s}e^{-\sigma_U(t-r)} [r^{-1}(\partial_t+\partial_r)(rv)]^2\,dx\,dt
\\  +\int_0^\infty \int_{\R^3}\la r\ra^{2-2s} e^{-\sigma_U(t-r)}\sigma'_U(t-r) [r^{-1}(\partial_t+\partial_r)(rv)]^2\,dx\,dt
  +s\int_0^\infty\int_{\R^3}\la r\ra^{1-2s}  e^{-\sigma_U(t-r)}|\ang v|^2\,dx\,dt
  \\\le
  \frac{1}{2} \int_{\R^3} \la r\ra^{2-2s} e^{-\sigma_U(-r)}
  \Bigl\{[r^{-1}(\partial_t+\partial_r)(rv)(0,x)]^2+ |\ang
  v(0,x)|^2+ \frac{a(a+2)}{2}\la r\ra^{-2} v^2(0,x)\Bigr\}\,dx
  \\+ \int_0^\infty\int_{\R^3} \la r\ra^{2-2s}
  e^{-\sigma_U(t-r)}e^{\frac{a}{2}\phi}|F| |r^{-1}(\partial_t+\partial_r)(rv)|\,dx\,dt
  \\+\int_0^\infty \int_{\R^3} \la
  r\ra^{2-2s}e^{-\sigma_U(t-r)}\Bigl(\frac{a(1-\beta)}{\la
    r\ra}|\partial_t v| + \frac{a}{2\la r\ra} |\beta'| |v| +
  \frac{a^2\beta(1-\beta)}{4\la r\ra^2}|v|
  \Bigr) |r^{-1}(\partial_t+\partial_r)(rv)|\,dx\,dt
  \\+\frac{a(a+2)}{8}\int_0^\infty\int_{\R^3} \beta'(r)\la r\ra^{-2s}e^{-\sigma_U(t-r)}v^2\,dx\,dt.
\end{multline}
Given the range of parameters, which additionally imply $2s<2+a$, the
coefficients of the terms in the left side are positive.
For the second to last term in \eqref{rp4}, we can use the Schwarz
inequality and Young's inequality to absorb the
$r^{-1}(\partial_t+\partial_r)(rv)$ factor back into the left.
We also use the boundedness of $\sigma_U(t-r)$ and that
\[\sigma_U'(t-r)\gtrsim \la t-r\ra^{-1},\quad\text{ on }
  \{\la t-r\ra\approx U\}\]
to see that
\begin{multline}
  \label{rp5}
  \sup_{t} \int_{\R^3} \la r\ra^{2-2s} 
  \Bigl\{[r^{-1}(\partial_t+\partial_r)(rv)(t,x)]^2+ |\ang v(t,x)|^2\Bigr\}\,dx
  + \int_0^\infty\int_{\R^3} \la r\ra^{1-2s}
  [r^{-1}(\partial_t+\partial_r)(rv)]^2\,dx\,dt
\\  +\int \int_{\{t-r\approx U\}}\la r\ra^{2-2s} \la t-r\ra^{-1} [r^{-1}(\partial_t+\partial_r)(rv)]^2\,dx\,dt
  +\int_0^\infty\int_{\R^3}\la r\ra^{1-2s} |\ang v|^2\,dx\,dt
  \\\lesssim
\int_{\R^3} \la r\ra^{2-2s} 
  \Bigl\{[r^{-1}(\partial_t+\partial_r)(rv)(0,x)]^2+ |\ang
  v(0,x)|^2 + \frac{a(a+2)}{2}\la r\ra^{-2} v^2(0,x)\Bigr\}\,dx
  \\+ \int_0^\infty\int_{\R^3} \la r\ra^{2-2s}e^{\frac{a}{2}\phi} |F|
  |r^{-1}(\partial_t+\partial_r)(rv)|\,dx\,dt
  + \int_0^\infty\int_{\{|x|\le 2\}} \Bigl(v^2 + (\partial_t v)^2\Bigr)\,dx\,dt.
\end{multline}
The singular behavior at $r=0$ for the Cauchy data can easily be resolved
using \eqref{L2Hardy}.
For the second to last term, we apply the Schwarz and H\"older inequalities and use Young's inequality to absorb
the multiplier factor into the left.  Here we also use \eqref{vw}.  For the last term, we
bound using \eqref{energyZv}.  Using these estimates on the last two terms of \eqref{rp5} yields \eqref{rpd}.
\end{proof}

We now introduce
the higher order variant of \eqref{rpd} and also use \eqref{HardyR}
and \eqref{HardyU} to
introduce bounds on the undifferentiated solution.
\begin{corollary}\label{cor_rp}
  Suppose $a, s$ satisfy
$0<a<2s<1$, and let $0\le \sigma<s$.  Then for $v\in C^4(\R_+\times\R^3)$ solving
\eqref{boxv} subject to \eqref{supp}, we have
\begin{multline}
  \label{rp}
\|\la r\ra^{-\frac{1}{2}-s} Z^{\le 2} v\|_{L^2_{t,x}}
  + \|\la r\ra^{\frac{1}{2}-s}
  r^{-1}(\partial_t+\partial_r)(rZ^{\le 2}v)\|_{L^2_{t,x}}
+\|\la r\ra^{\frac{1}{2}-s} \ang Z^{\le 2} v\|_{L^2_{t,x}}
\\  +\|\la r\ra^{-\frac{1}{2}-s+\sigma}\la t-r\ra^{-\sigma}
Z^{\le 2} v\|_{\ell^{\frac{2}{1-2\sigma}}_U\ell^2_{\tau}L^2_{t,x}(C^U_\tau)}
\\+ \|\la r\ra^{\frac{1}{2}-s+\sigma}\la t-r\ra^{-\sigma}
r^{-1}(\partial_t+\partial_r)(rZ^{\le 2} v)\|_{\ell^{\frac{2}{1-2\sigma}}_U
  \ell^2_{\tau} L^2_{t,x}(C^U_\tau)}
 \\\lesssim \|\partial \nabla^{\le 2} v(0,\cd)\|_{L^2} + \|\la
 r\ra^{\frac{3}{2}-s+\frac{a}{2}} Z^{\le 2} F\|_{\ell^2_\tau\ell^2_{R\le \tau/2} L^2_{t,x}(C^R_\tau)}
 + \|\la r\ra^{1-s+\frac{a}{2}}\la t-r\ra^{\frac{1}{2}} Z^{\le 2} F\|_{\ell^1_U
   \ell^2_{\tau\ge 4U} L^2_{t,x}(C^U_\tau)}.
\end{multline}
\end{corollary}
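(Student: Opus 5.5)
The plan is to apply \eqref{rpd} to $Z^{\mu}v$ for $|\mu|\le 2$. The commutators are handled as in the proof of Corollary \ref{cor_energyZv}. The bounds on the undifferentiated solution are then read off from \eqref{HardyR} and \eqref{HardyU}.

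\emph{Commutators.} For $|\mu|\le 2$ we write
\[\L Z^\mu v = Z^\mu \L v + [\L,Z^\mu]v,\]
and expand $[\L,Z^\mu]$ using \eqref{Scomm}, \eqref{Omegacomm}, \eqref{dcomm}; the $2\L$ part of \eqref{Scomm} is simply absorbed into $Z^{\le 2}F$. The remaining commutator terms fall into three groups.
\begin{itemize}
\item Terms of the form $\frac{a\beta}{\la r\ra^2}r^{-1}(\partial_t+\partial_r)(rZ^{\le1}v)$ are placed in $F_1$. The required norm is
\[\|\la r\ra^{\frac32-s+\frac a2}\la r\ra^{-2}e^{-\frac a2\phi}\, r^{-1}(\partial_t+\partial_r)(rZ^{\le 1}v)\|_{L^2}\lesssim \|\la r\ra^{-\frac12-s}r^{-1}(\partial_t+\partial_r)(rZ^{\le1}v)\|_{L^2},\]
which carries an extra $\la r\ra^{-1}$ relative to the left side of \eqref{rpd} at the lower level.
\item Terms of the form $\frac{a\beta}{r\la r\ra}\ang Z^{\le 1}v$ and $\la r\ra^{-3}Z^{\le1}v$ are handled the same way. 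They are controlled by the $\ang$ term and by $\|\la r\ra^{-\frac12-s}Z^{\le1}v\|_{L^2}$, each with one extra power of decay.
\item The compactly supported terms $V_\ast$ are bounded via \eqref{energyZv}. The $\omega_j$ singularity is treated with \eqref{L2Hardy}, exactly as in Corollary \ref{cor_energyZv}.
\end{itemize}
Iterating once from $Z^{\le0}$ to $Z^{\le1}$ and then to $Z^{\le2}$ closes this step. It is crucial that the $e^{-\frac a2\phi}$ factor is included when these terms are regarded as $e^{\frac a2\phi}F$, so that the weight $\la r\ra^{a/2}$ in \eqref{rpd} cancels.

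\emph{Undifferentiated solution.} To bound $\|\la r\ra^{-\frac12-s}Z^{\le 2}v\|_{L^2}$ we apply \eqref{HardyR} with $\beta\equiv 1$ and the ghost weight dropped, using that $\sigma_U$ is bounded. The $L^2$ term on the right of \eqref{HardyR} is then already controlled by the $r^{-1}(\partial_t+\partial_r)(rZ^{\le2}v)$ term of \eqref{rpd}. The initial data term is handled by \eqref{L2Hardy} and the support assumption. The $\ell^{2/(1-2\sigma)}_U$ bounds follow from \eqref{HardyU} and \eqref{HolderU}, with both factors controlled by the $L^2$ and $\ell^\infty_U$ terms of \eqref{rpd}. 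The commutator bounds in the previous step need $\la r\ra^{-\frac12-s}Z^{\le1}v$ already controlled, so the Hardy step is performed at each level before moving to the next order.

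\emph{Forcing norms.} Finally we split $F$ into $F_1 = F\,\mathbbm{1}_{\{R\le\tau/2\}}$ (including $C^{\tau/2}_\tau$) and $F_2 = F\,\mathbbm{1}_{\{U\text{-regions}\}}$. In the $\ell^2_\tau\ell^2_R$ norm the $L^2_{t,x}$ norm of $F_1$ is exactly the first forcing term. The compactly supported contributions fed into \eqref{energyZv} require only $\la r\ra^{\frac32-s}$ weights, which are weaker.

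The main obstacle is making sure the commutator terms really are perturbative. The term $\frac{a\beta}{\la r\ra^{2}}(\partial_t+\partial_r+\frac1r)$ has coefficient $a$ without smallness in $r$ relative to what \eqref{rpd} controls only if the weights match. Since the gain is one full power of $\la r\ra$, I expect it to close without any constraint beyond $a<2s<1$. If the gain were borderline, I would instead absorb the commutator term using the $(1-s+a)$ coefficient in \eqref{rp3}.
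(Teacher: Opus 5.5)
Your proposal is correct and follows the paper's proof essentially step for step. You combine \eqref{rpd} with \eqref{HardyR}, \eqref{HardyU}, \eqref{HolderU} at each order (the paper's \eqref{rpz0}); you commute with $Z$ and place the commutators in $F_1$, where the $e^{-\frac a2\phi}$ factor makes them dominated by already-controlled lower-order norms; you handle the compactly supported pieces and the $\omega_j$ singularity via \eqref{L2Hardy} and \eqref{energyZv}; and you finish with the $R$/$U$ splitting of $F$.

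Your closing worry is unnecessary: every commutator error carries one fewer vector field, so it is controlled inductively by the previous level without any absorption or smallness in $a$, exactly as in the paper.
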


\begin{proof}
  We combine \eqref{rpd} with \eqref{HardyR} (with $\beta\equiv 1$), \eqref{HardyU}, and
\eqref{HolderU} to get the following.
\begin{multline}
  \label{rpz0}
\|\la r\ra^{-\frac{1}{2}-s} v\|_{L^2_{t,x}}
  + \|\la r\ra^{\frac{1}{2}-s}
  r^{-1}(\partial_t+\partial_r)(rv)\|_{L^2_{t,x}}
+\|\la r\ra^{\frac{1}{2}-s} \ang v\|_{L^2_{t,x}}
\\  +\|\la r\ra^{-\frac{1}{2}-s+\sigma}\la t-r\ra^{-\sigma}
v\|_{\ell^{\frac{2}{1-2\sigma}}_U\ell^2_{\tau}L^2_{t,x}(C^U_\tau)}
+ \|\la r\ra^{\frac{1}{2}-s+\sigma}\la t-r\ra^{-\sigma}
r^{-1}(\partial_t+\partial_r)(rv)\|_{\ell^{\frac{2}{1-2\sigma}}_U
  \ell^2_{\tau} L^2_{t,x}(C^U_\tau)}
 \\\lesssim \|\partial^{\le 1} v(0,\cd)\|_{L^2} + \inf_{F=F_1+F_2}\Bigl(\|\la
 r\ra^{\frac{3}{2}-s+\frac{a}{2}} F_1\|_{L^2_{t,x}}
 + \|\la r\ra^{1-s+\frac{a}{2}}\la t-r\ra^{\frac{1}{2}} F_2\|_{\ell^1_U
   \ell^2_{\tau} L^2_{t,x}(C^U_\tau)}\Bigr).
\end{multline}
As such, applying \eqref{rpz0} to $Z^{\le 1} v$ and using
\eqref{Scomm}, \eqref{Omegacomm}, and \eqref{dcomm}, which are
included in $F_1$, we see that
\begin{multline}
  \label{rpz1}
\|\la r\ra^{-\frac{1}{2}-s} Z^{\le 1}v\|_{L^2_{t,x}}
  + \|\la r\ra^{\frac{1}{2}-s}
  r^{-1}(\partial_t+\partial_r)(rZ^{\le 1}v)\|_{L^2_{t,x}}
  + \|\la r\ra^{\frac{1}{2}-s} \ang Z^{\le 1} v\|_{L^2_{t,x}}
\\  +\|\la r\ra^{-\frac{1}{2}-s+\sigma}\la t-r\ra^{-\sigma}
Z^{\le 1} v\|_{\ell^{\frac{2}{1-2\sigma}}_U\ell^2_{\tau}L^2_{t,x}(C^U_\tau)}
+ \|\la r\ra^{\frac{1}{2}-s+\sigma}\la t-r\ra^{-\sigma}
r^{-1}(\partial_t+\partial_r)(rZ^{\le 1}v)\|_{\ell^{\frac{2}{1-2\sigma}}_U
  \ell^2_\tau L^2_{t,x}(\C^U_\tau)}
 \\\lesssim \|\partial \nabla^{\le 1} v(0,\cd)\|_{L^2} + \inf_{F=F_1+F_2}\Bigl(\|\la
 r\ra^{\frac{3}{2}-s+\frac{a}{2}} Z^{\le 1} F_1\|_{ L^2_{t,x}}
 + \|\la r\ra^{1-s+\frac{a}{2}}\la t-r\ra^{\frac{1}{2}} Z^{\le 1} F_2\|_{\ell^1_U
   \ell^2_\tau L^2_{t,x}(C^U_\tau)}\Bigr)
 \\+ \|\la r\ra^{-\frac{1}{2}-s}
 r^{-1}(\partial_t+\partial_r)(r v)\|_{L^2_{t,x}}
 +\|\la r\ra^{-\frac{1}{2}-s} \ang v\|_{L^2_{t,x}}
   + \|\la r\ra^{-\frac{3}{2}-s} v\|_{L^2_{t,x}}
   + \|\partial^{\le 1} v\|_{L^2_{t,x}(\R_+\times\{r\le 2\})}.
 \end{multline}
 Subsequently applying \eqref{rpz1} to $Z^{\le 1} v$ and taking care
 with the limited regularity near $r=0$ of $\omega_j$, we get
\begin{multline}
  \label{rpz2}
\|\la r\ra^{-\frac{1}{2}-s} Z^{\le 2}v\|_{L^2_{t,x}}
  + \|\la r\ra^{\frac{1}{2}-s}
  r^{-1}(\partial_t+\partial_r)(rZ^{\le 2}v)\|_{L^2_{t,x}}
  + \|\la r\ra^{\frac{1}{2}-s} \ang Z^{\le 2} v\|_{L^2_{t,x}}
\\  +\|\la r\ra^{-\frac{1}{2}-s+\sigma}\la t-r\ra^{-\sigma}
Z^{\le 2} v\|_{\ell^{\frac{2}{1-2\sigma}}_U\ell^2_{\tau}L^2_{t,x}(C^U_\tau)}
+ \|\la r\ra^{\frac{1}{2}-s+\sigma}\la t-r\ra^{-\sigma}
r^{-1}(\partial_t+\partial_r)(rZ^{\le 2}v)\|_{\ell^{\frac{2}{1-2\sigma}}_U
  \ell^2_\tau L^2_{t,x}(\C^U_\tau)}
 \\\lesssim \|\partial \nabla^{\le 2} v(0,\cd)\|_{L^2} + \inf_{F=F_1+F_2}\Bigl(\|\la
 r\ra^{\frac{3}{2}-s+\frac{a}{2}} Z^{\le 2} F_1\|_{ L^2_{t,x}}
 + \|\la r\ra^{1-s+\frac{a}{2}}\la t-r\ra^{\frac{1}{2}} Z^{\le 2} F_2\|_{\ell^1_U
   \ell^2_\tau L^2_{t,x}(C^U_\tau)}\Bigr)
 \\+ \|\la r\ra^{-\frac{1}{2}-s}
 r^{-1}(\partial_t+\partial_r)(r Z^{\le 1} v)\|_{L^2_{t,x}}
 + \|\la r\ra^{-\frac{1}{2}-s} \ang Z^{\le 1}v\|_{L^2_{t,x}}
   + \|\la r\ra^{-\frac{3}{2}-s} Z^{\le 1} v\|_{L^2_{t,x}}
 \\  + \|\partial^{\le 1} Z^{\le 1} v\|_{L^2_{t,x}(\R_+\times\{r\le
     2\})}
   + \|r^{-1}(1-\beta(r/2))\partial_t v\|_{L^2_{t,x}}.
 \end{multline}
The first three error terms are readily bounded using \eqref{rpz1} and
\eqref{rpz0} to give
\begin{multline}
  \label{rpz2a}
\|\la r\ra^{-\frac{1}{2}-s} Z^{\le 2}v\|_{L^2_{t,x}}
  + \|\la r\ra^{\frac{1}{2}-s}
  r^{-1}(\partial_t+\partial_r)(rZ^{\le 2}v)\|_{L^2_{t,x}}
  + \|\la r\ra^{\frac{1}{2}-s} \ang Z^{\le 2} v\|_{L^2_{t,x}}
\\  +\|\la r\ra^{-\frac{1}{2}-s+\sigma}\la t-r\ra^{-\sigma}
Z^{\le 2} v\|_{\ell^{\frac{2}{1-2\sigma}}_U\ell^2_{\tau}L^2_{t,x}(C^U_\tau)}
+ \|\la r\ra^{\frac{1}{2}-s+\sigma}\la t-r\ra^{-\sigma}
r^{-1}(\partial_t+\partial_r)(rZ^{\le 2}v)\|_{\ell^{\frac{2}{1-2\sigma}}_U
  \ell^2_\tau L^2_{t,x}(\C^U_\tau)}
 \\\lesssim \|\partial \nabla^{\le 2} v(0,\cd)\|_{L^2} + \inf_{F=F_1+F_2}\Bigl(\|\la
 r\ra^{\frac{3}{2}-s+\frac{a}{2}} Z^{\le 2} F_1\|_{ L^2_{t,x}}
 + \|\la r\ra^{1-s+\frac{a}{2}}\la t-r\ra^{\frac{1}{2}} Z^{\le 2} F_2\|_{\ell^1_U
   \ell^2_\tau L^2_{t,x}(C^U_\tau)}\Bigr)
 \\   + \|\partial^{\le 1} Z^{\le 1} v\|_{L^2_{t,x}(\R_+\times\{r\le
     2\})}
   + \|r^{-1}(1-\beta(r/2))\partial_t v\|_{L^2_{t,x}}.
 \end{multline}
 As in the proof of Corollary \ref{cor_energyZv}, we apply
 \eqref{L2Hardy} to the last term, which gives that the remaining
 error terms are
 \[
\|\partial^{\le 1} Z^{\le 1} v\|_{L^2_{t,x}(\R_+\times\{r\le
     2\})}
   + \|r^{-1}(1-\beta(r/2)) \partial_t v\|_{L^2_{t,x}}
\lesssim \|\partial^{\le 1} Z^{\le 1} v\|_{L^2_{t,x}(\R_+\times \{r\le 4\})}.\]
The right side can be controlled using \eqref{energyZv}.

Finally, we make the choice of decomposition $F=F_1+F_2$
 \[F_1=\bigcup_\tau \bigcup_{1\le R\le \tau/2} \chi_{C^R_\tau}
   Z^{\le 2} F,\qquad F_2 = \bigcup_{\tau} \bigcup_{1\le U\le \tau/4}
  \chi_{C^U_\tau}Z^{\le 2} F.\]
\end{proof}

\vskip .25in
\section{Space-time Klainerman Sobolev estimates}
We use the variant of the space-time Klainerman-Sobolev of \cite{MTT}
that was introduced in \cite{AMW}.  The key was to use a different
change of variables so that the differentiated term was necessarily a
good derivative $\partial_t+\partial_r+\frac{1}{r}$ and did not
necessitate introducing a third vector field to change $\partial_r$
into such a good derivative.  The theorem of \cite{AMW} states:
\begin{theorem}\label{thm_ks}
  For $\tau\ge 1$, $1\le R\le \tau/2$, $1\le U\le \tau/4$, we have
  \begin{multline}
    \label{CRT}
    \|w\|_{L^\infty_{t,x}(C^R_\tau)} \lesssim
    \frac{1}{\tau^{\frac{1}{2}}R^{\frac{3}{2}}} \|Z^{\le
        2}w\|_{L^2_{t,x}(\tC^R_\tau)} +
      \frac{1}{\tau^{\frac{1}{2}}R^{\frac{1}{2}}}
        \|r^{-1}(\partial_t+\partial_r)(rZ^{\le 2}
        w)\|_{L^2_{t,x}(\tC^R_\tau)}
        \\+ \frac{1}{\tau^{\frac{1}{2}}R^{\frac{1}{2}}} \|\ang (Z^{\le
          2} w)\|_{L^2_{t,x}(\tC^R_\tau)}
      \end{multline}
      and
      \begin{equation}
        \label{CUT}
        \|w\|_{L^\infty_{t,x}(C^U_\tau)} \lesssim
        \frac{1}{\tau^{\frac{3}{2}}U^{\frac{1}{2}}} \|Z^{\le
          2}w\|_{L^2_{t,x}(\tC^U_\tau)} +
        \frac{1}{\tau^{\frac{1}{2}}U^{\frac{1}{2}}}
        \|r^{-1}(\partial_t+\partial_r)(rZ^{\le 2} w)\|_{L^2_{t,x}(\tC^U_\tau)}.
      \end{equation}
      Moreover, if $2\le q\le 4$, we have
      \begin{multline}
        \label{CRTq}
\|w\|_{L^q_{t,x}(C^R_\tau)} \lesssim \tau^{\frac{1}{q}-\frac{1}{2}}
R^{\frac{3}{q}-\frac{3}{2}} \|Z^{\le 1} w\|_{L^2_{t,x}(\tC^R_\tau)}
+ \tau^{\frac{1}{q}-\frac{1}{2}} R^{\frac{3}{q}-\frac{1}{2}}
\|r^{-1}(\partial_t+\partial_r)(rw)\|_{L^2_{t,x}(\tC^R_\tau)}
\\+\tau^{\frac{1}{q}-\frac{1}{2}}R^{\frac{3}{q}-\frac{1}{2}} \|\ang w\|_{L^2_{t,x}(\tC^R_\tau)}
\end{multline}
and
\begin{equation}
  \label{CUTq}
  \|w\|_{L^q_{t,x}(C^U_\tau)} \lesssim U^{\frac{1}{q}-\frac{1}{2}}
  \tau^{\frac{3}{q}-\frac{3}{2}} \|Z^{\le 1}
  w\|_{L^2_{t,x}(\tC^U_\tau)}
  + U^{\frac{1}{q}-\frac{1}{2}}\tau^{\frac{3}{q}-\frac{1}{2}}\|r^{-1}(\partial_t+\partial_r)(rw)\|_{L^2_{t,x}(\tC^U_\tau)}.
\end{equation}
\end{theorem}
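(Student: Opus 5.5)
The plan is to reduce both pointwise bounds to a fixed-size anisotropic Sobolev embedding, using coordinates adapted to the vector fields. On $\{0<r<t\}$ I consider the two vector fields
\[S=t\partial_t+r\partial_r,\qquad Y=r(\partial_t+\partial_r).\]
A direct computation gives $[S,Y]=S(r)(\partial_t+\partial_r)+r[S,\partial_t+\partial_r]=Y-Y=0$. Both also commute with $\Omega$. Moreover the determinant of their coefficient matrix in $(t,r)$ is $tr-r^2=r(t-r)\neq 0$ inside the cone. Hence there are local coordinates $(\alpha,\gamma)$ in the $(t,r)$ half-plane with $S=\partial_\alpha$ and $Y=\partial_\gamma$, and $dt\,dr=r(t-r)\,d\alpha\,d\gamma$.

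Next I check that each dyadic region becomes a box of unit size. The flow of $S$ multiplies $t$, $r$ and $t-r$ by $e^{\alpha}$. The flow of $Y$ fixes $t-r$ and multiplies $r$ by $e^{\gamma}$. So $C^R_\tau$ (with $R\ge 2$), $C^U_\tau$ and $C^{\tau/2}_\tau$ each correspond, uniformly in $\tau,R,U$, to a set of $(\alpha,\gamma,\omega)\in\R^2\times\S^2$ of bounded size, and the enlargements $\tC$ contain a fixed neighborhood of it. I insert a smooth cutoff adapted to $\tC^R_\tau$ (resp.\ $\tC^U_\tau$); its derivatives in $\alpha,\gamma,\omega$ are $O(1)$, which is why the right sides are taken over $\tC$. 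I then apply the anisotropic embedding
\[\|f\|_{L^\infty}\lesssim \sum_{j\le 1}\|\partial_\gamma^j(\partial_\alpha,\partial_\omega)^{\le 2}f\|_{L^2(d\alpha\,d\gamma\,d\omega)}.\]
This holds because $H^1$ in the single variable $\gamma$ embeds into $L^\infty$, and $H^2$ in the three variables $(\alpha,\omega)$ embeds into $L^\infty$. Since $\partial_\gamma=Y$ commutes with $S$ and $\Omega$, the mixed term is $Y Z^{\le 2}w$, and the identity
\[Y(Z^{\le2}w)=r\cdot r^{-1}(\partial_t+\partial_r)(rZ^{\le 2}w)-Z^{\le 2}w\]
converts it into the good derivative appearing in the statement, plus a term already counted. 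This anisotropic choice, one good derivative and two vector fields, is exactly what keeps the cost at two vector fields.

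It remains to convert the measures. We have $dx\,dt=r^2\,r(t-r)\,d\alpha\,d\gamma\,d\omega$. On $C^R_\tau$ this weight is $\approx R^3\tau$, so $\|f\|_{L^2(d\alpha d\gamma d\omega)}\approx R^{-3/2}\tau^{-1/2}\|f\|_{L^2_{t,x}}$. This gives the first term of \eqref{CRT} directly. The good-derivative term picks up the extra factor $r\approx R$, giving $\tau^{-1/2}R^{-1/2}$. The angular derivatives should be kept as $r\ang$ rather than $\Omega$, which produces the third term. On $C^U_\tau$ the weight is $\approx\tau^3U$, so the three terms give $\tau^{-3/2}U^{-1/2}$ and, using $r\approx\tau$, $\tau^{-1/2}U^{-1/2}$; here $\Omega\in Z$ is simply absorbed into $Z^{\le2}$, yielding \eqref{CUT}. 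For $C^{R=1}_\tau$ the coordinates degenerate at $r=0$, so there I use the $\nabla\subset Z$ fields together with $S$ and ordinary Sobolev embedding on a rescaled slab $\{r\le 2\}\times[\tau,2\tau]$, whose time scale is handled by $S\approx\tau\partial_t$ modulo $r\partial_r$.

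For the $L^q$ bounds \eqref{CRTq}, \eqref{CUTq} I use the same localization and change of variables, but replace $L^\infty$ with a single-derivative Gagliardo--Nirenberg inequality in the four variables $(\alpha,\gamma,\omega)$. This is $\|f\|_{L^q}\lesssim\|f\|_{H^1}$ for $2\le q\le 4$, which is sharp at $q=4$. I then track the Jacobian: $L^q$ norms carry the weight $(\text{Jacobian})^{1/q}$ and $L^2$ norms carry $(\text{Jacobian})^{1/2}$. This gives the factors $(R^3\tau)^{1/q-1/2}$ and, for the good and angular derivative terms, an extra $R$; similarly $(\tau^3U)^{1/q-1/2}$ on $C^U_\tau$.

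The main obstacle, I expect, is the bookkeeping of the coordinates near the boundaries of the regions. The $U=1$ region near the light cone, where $t-r$ is bounded, and the transition at $R\approx\tau/2$ both need care, as does ensuring that the cutoffs' derivatives and the commutation of $\partial_\alpha$ with the angular derivatives are uniformly bounded so that the anisotropic embedding applies with constants independent of $\tau,R,U$.
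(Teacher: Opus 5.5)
Your argument for the bulk regions ($R\ge 2$, $U\ge 2$, $C^{\tau/2}_\tau$) is the same as the paper's. Your $(\alpha,\gamma)$ are exactly the coordinates $t-r=e^s$, $r=e^{s+\rho}$, with $\partial_s=S$, $\partial_\rho=r(\partial_t+\partial_r)$, and the same anisotropic $H^1_\rho H^2_{s,\omega}$ embedding.

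\textbf{The gap is at $R=1$.} Ordinary Sobolev embedding on the slab $\{r\le 2\}\times[\tau,2\tau]$ in $(t,x)$ needs, at best anisotropically, $(\tau\partial_t)^{\le 1}\nabla_x^{\le 2}w$. Rewriting $\tau\partial_t\approx S-x\cdot\nabla$ turns this into $S\nabla^2 w$ and $\nabla^3 w$, i.e.\ three vector fields, whereas \eqref{CRT} allows only $Z^{\le 2}w$ plus one good derivative. Putting a spatial derivative outermost does not help. Indeed $\partial_{x_j}=\omega_j(\partial_t+\partial_r)+\ang_j-\omega_j\partial_t$, and $\partial_t Z^2w=(t-r)^{-1}\bigl(SZ^2w-r(\partial_t+\partial_r)Z^2w\bigr)$ still contains $SZ^2w$. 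This is precisely the loss the theorem is built to avoid, since $F_p$ only tolerates two vector fields.

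The paper instead uses the coordinates $t=e^s+|y|$, $x=y$. In these coordinates $\partial_s=(t-r)\partial_t=S-r(\partial_t+\partial_r)$, and $\partial_{y_j}=\partial_{x_j}+\omega_j\partial_t=\omega_j(\partial_t+\partial_r)+\ang_j$ is a purely good derivative. One places a $\partial_y$ outermost and writes the inner two derivatives as $Z$'s, with coefficients built from $\omega$, $(t-r)/t$, and so on. The resulting $1/r$ singularities are controlled by a Hardy inequality of the type \eqref{HardyR}; they come from differentiating $\omega$ and from $(\partial_t+\partial_r)g=r^{-1}(\partial_t+\partial_r)(rg)-g/r$.

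That is where the $\ang Z^{\le 2}w$ term in \eqref{CRT} genuinely comes from. In your $(\alpha,\gamma,\omega)$ chart the angular derivatives are just $\Omega\in Z$, and that term is not needed there. Your slab argument is fine for \eqref{CRTq} at $R=1$, since $H^1\subset L^4$ in four dimensions costs only $Z^{\le 1}w$.

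\textbf{A second, smaller gap is $U=1$,} which you flag but do not close. On the cone, $S=t(\partial_t+\partial_r)=r(\partial_t+\partial_r)$, so your frame degenerates (the Jacobian $r(t-r)$ vanishes). The set $\{0<t-r\le 2\}$ corresponds to an unbounded $\alpha$-range, and the part $\{t-r\le 0\}$ of $C^{U=1}_\tau$ is not covered at all.

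The remedy is to use $\partial_t$ instead of $S$ as the transversal direction. With $t=q+e^\rho$, $r=e^\rho$, one has $\partial_q=\partial_t=t^{-1}(S-x\cdot\nabla)$, a combination of $Z$'s with bounded coefficients. Also $\partial_\rho=r(\partial_t+\partial_r)$ and $dt\,dx=r^3\,dq\,d\rho\,d\omega\approx\tau^3$. Your anisotropic argument then yields \eqref{CUT} and \eqref{CUTq} for $U=1$.
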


For the majority of the cases ($R>1$, $U>1$), we apply Sobolev embeddings in
$(s,\rho,\omega)$ where
\[t-r=e^s,\qquad r=e^{s+\rho}.\]
If we set $\phi(s,\rho,\omega)=(e^s+e^{s+\rho},e^{s+\rho}\omega)$,
the key observation is that
\[\partial_s (w(\phi(s,\rho,\omega))) = (Sw)(\phi(s,\rho,\omega)),
  \qquad
  \partial_\rho(w(\phi(s,\rho,\omega))) = (r(\partial_t+\partial_r)w)(\phi(s,\rho,\omega)).\]

For $U=1$, we instead use $(q,\rho,\omega)$ with
\[t=q+e^\rho,\qquad r=e^\rho.\]
And the most delicate case is $R=1$, where we apply standard
embeddings in $(s,y)$ with
\[t=e^s+|y|,\qquad x=y.\]
In this last case extra care must be taken with singularities near the
origin, and a version of \eqref{HardyR} is used.

\vskip .25in
\section{Proof of Theorem \ref{main_thm}}
To show that \eqref{conjugated} has a global solution provided
$\varepsilon>0$ is sufficiently small, we set $v_0\equiv 0$ and let
$v_j$ solve
\begin{equation}
  \label{vj}
  \begin{cases}
    \L v_j = e^{\frac{a}{2}\phi(r)}F_p(e^{-\frac{a}{2}\phi(r)}v_{j-1}),\\
    v_j(0,\cd) = \varepsilon v_{(0)},\quad \partial_t
    v_j(0,\cd)=\varepsilon v_{(1)}.
  \end{cases}
\end{equation}
Here $v_{(0)}, v_{(1)}\in C^\infty_c(\R^3)$, and without loss of
generality, we shall assume that they have supports contained in
$\{r\le \frac{1}{2}\}$.

We fix $\max\Bigl(p_c(3+a), \frac{7}{3}\Bigr)<p<\frac{4}{2+a}+1$.  We
notice that $p_c(3+a)\ge\frac{7}{3}$ provided $a\le \frac{1}{7}$. 
And we set
\[s=\frac{3+a}{2}-\frac{2}{p-1},\quad \sigma=\frac{p-2}{2p}.\]
With this choice of $\sigma$, we have $\frac{2}{1-2\sigma}=p$.
We make a few notes about the hypotheses in Corollary \ref{cor_rp}.
The requirement that $2s<1$ corresponds to $p<\frac{4}{2+a}+1$.  The
latter value, when $a=0$, corresponds to the conformal exponent
for the wave equation, and global existence for powers greater than or
equal to this value is known to follow from Strichartz estimates.
Additionally
\[a<2s \iff p>\frac{7}{3},\]
which is the source of the latter restriction.  We also observe 
\[\sigma<s \iff p>p_c(3+a).\]

We will bound and show convergence in the space dictated by:
\begin{multline}
  \label{X}
  \|v\|_X = 
  \|\la r\ra^{-\frac{1}{2}-s} v\|_{L^2_{t,x}}
 + \|\la r\ra^{\frac{1}{2}-s}
 r^{-1}(\partial_t+\partial_r)(rv)\|_{L^2_{t,x}}
 + \|\la r\ra^{\frac{1}{2}-s}\ang v\|_{L^2_{t,x}}
\\+\|\la r\ra^{-\frac{1}{2}-s+\sigma}\la t-r\ra^{-\sigma}
v\|_{\ell^{p}_U\ell^2_{\tau}L^2_{t,x}(C^U_\tau)}
+ \|\la r\ra^{\frac{1}{2}-s+\sigma}\la t-r\ra^{-\sigma}
r^{-1}(\partial_t+\partial_r)(rv)\|_{\ell^{p}_U
  \ell^2_\tau L^2_{t,x}(\C^U_\tau)},
\end{multline}
along with its higher order variants $\|Z^{\le 2} v\|_X$.

To complete the proof, we shall first examine the multilinear bounds
needed to close the iteration.  These are similar to what appeared in
\cite{AMW}, but we must account for the additional weights.
Subsequently, we will show that there exists a $C_0$ so that
\begin{equation}
  \label{bdd_goal}
  \|Z^{\le 2}v_j\|_X \le 2C_0\varepsilon.
\end{equation}
In the last section, we show that the sequence $(v_j)$ is Cauchy in
$X$.  It must, thus, converge, and by standard results, the limit
corresponds to the desired solution.

\vskip .125in
\subsection{Multilinear estimates}
Based on the right sides of \eqref{rp}
and on the possibility of $Z^{\le 2} F_p(v)$ contributing $|v|^{p-1}Z^{\le 2}
v$ and $|v|^{p-2} (Z^{\le 1}v)^2$, we consider four cases:
\begin{align}
  \label{Z2U}
\|\la r\ra^{1-s-\frac{(p-1)a}{2}} \la
  t-r\ra^{\frac{1}{2}}|g|^{p-1}f\|_{\ell^1_U\ell^2_{\tau\ge 4U}
  L^2_{t,x}(C^U_\tau)}&\lesssim \|Z^{\le 2} g\|^{p-1}_X \|f\|_X,\\
  \label{Z2R}
  \|\la r\ra^{\frac{3}{2}-s-\frac{(p-1)a}{2}} |g|^{p-1} f\|_{\ell^2_\tau
  \ell^2_{R\le \tau/2} L^2_{t,x}(C^R_\tau)}&\lesssim \|Z^{\le 2}
  g\|^{p-1}_X \|f\|_X,\\
  \label{Z1U}
\|\la r\ra^{1-s-\frac{(p-1)a}{2}} \la t-r\ra^{\frac{1}{2}}
  |g|^{p-2}f^2\|_{\ell^1_U\ell^2_{\tau\ge 4U}L^2_{t,x}(C^U_\tau)}&\lesssim \|Z^{\le 2}g\|^{p-2}_X
                                \|Z^{\le 1} f\|^2_X,
  \\
  \label{Z1R} \|\la r\ra^{\frac{3}{2}-s-\frac{(p-1)a}{2}} |g|^{p-2}
  f^{2}\|_{\ell^2_\tau\ell^2_{R\le \tau/2} L^2_{t,x}(C^R_\tau)}&\lesssim \|Z^{\le 2} g\|_X^{p-2} \|Z^{\le 1} f\|_X^2.
\end{align}
These are all quite analogous to the corresponding estimates in
\cite{AMW}, though we must take care with the additional weights $\la
r\ra^{-a/2}$.  In each case, we apply H\"older's inequality followed
by the bounds from Theorem \ref{thm_ks}.

\vskip .125in
\subsubsection{Proof of \eqref{Z2U}}
We begin with
\begin{multline*}\|\la r\ra^{1-s-\frac{(p-1)a}{2}} \la t-r\ra^{\frac{1}{2}} |g|^{p-1}
  f\|_{L^2_{t,x}(C^U_\tau)}
  \\\lesssim
  \|\la r\ra^{\frac{3-2\sigma}{2(p-1)}-\frac{a}{2}} \la t-r\ra^{\frac{1+2\sigma}{2(p-1)}}  g\|^{p-1}_{L^\infty_{t,x}(C^U_\tau)}
\|\la r\ra^{-\frac{1}{2}-s+\sigma} \la t-r\ra^{-\sigma} f\|_{L^2_{t,x}(C^U_\tau)}.
\end{multline*}
For $U\le \tau/4$, we may apply \eqref{CUT}.
Noticing that
\[\frac{3-2\sigma}{2(p-1)} - \frac{3}{2} = -\frac{1}{2}
  -\Bigl(\frac{3}{2}-\frac{2}{p-1}\Bigr) +\sigma
\qquad\text{and}\qquad \frac{1+2\sigma}{2(p-1)} = \frac{1}{2}-\sigma
  ,\]
we obtain that the right side
is in turn bounded by
\begin{multline*}
\Bigl(\|\la r\ra^{-\frac{1}{2}-s+\sigma} \la t-r \ra^{-\sigma} Z^{\le 2} g\|_{L^2_{t,x}(\tC^U_\tau)}
+ \|\la r\ra^{\frac{1}{2}-s+\sigma} \la
t-r\ra^{-\sigma}r^{-1}(\partial_t+\partial_r)(rZ^{\le 2} g)\|_{L^2_{t,x}(\tC^U_\tau)}
\Bigr)^{p-1}
\\\times
\|\la r\ra^{-\frac{1}{2}-s+\sigma} \la t-r\ra^{-\sigma} f\|_{L^2_{t,x}(C^U_\tau)}.  
\end{multline*}
Thus, upon summing and using that $2\le 2p$, we get
\begin{multline*}
  \|\la r\ra^{1-s-\frac{(p-1)a}{2}} \la t-r\ra^{\frac{1}{2}} |g|^{p-1}
  f\|_{\ell^1_U \ell^2_{\tau\ge 4U} L^2_{t,x}(C^U_\tau)}
  \\\lesssim \Bigl(\|\la r\ra^{-\frac{1}{2}-s+\sigma} \la t-r
  \ra^{-\sigma} Z^{\le 2} g\|^{p-1}_{\ell^p_U \ell^2_{\tau\ge 4U} L^2_{t,x}(\tC^U_\tau)}
  +
  \|\la r\ra^{\frac{1}{2}-s+\sigma} \la
t-r\ra^{-\sigma}r^{-1}(\partial_t+\partial_r)(rZ^{\le 2}
g)\|^{p-1}_{\ell^p_U \ell^2_{\tau\ge 4U}L^2_{t,x}(\tC^U_\tau)}
  \Bigr)
 \\ \times \|\la r\ra^{-\frac{1}{2}-s+\sigma} \la t-r\ra^{-\sigma}
  f\|_{\ell^p_U \ell^2_{\tau\ge 4U}L^2_{t,x}(C^U_\tau)},
\end{multline*}
which is $\O(\|Z^{\le 2}g\|_X^{p-1}\|f\|_X)$ as claimed.\qed

\vskip .125in
\subsubsection{Proof of \eqref{Z2R}}
Observing that
\[  \|\la r\ra^{\frac{3}{2}-s-\frac{(p-1)a}{2}} |g|^{p-1}
  f\|_{L^2_tL^2_x(C^R_\tau)}
  \lesssim \|\la r \ra^{\frac{2}{p-1}-\frac{a}{2}}
  g\|^{p-1}_{L^\infty_{t,x}(C^R_\tau)}
  \|\la r\ra^{-\frac{1}{2}-s} f\|_{L^2_{t,x}(C^R_\tau)}
\]
and applying \eqref{CRT} to the first factor in the right, we get
\begin{multline*}
  \|\la r\ra^{\frac{3}{2}-s-\frac{(p-1)a}{2}} |g|^{p-1}
  f\|_{L^2_tL^2_x(C^R_\tau)}\lesssim
  \|\la r\ra^{-\frac{1}{2}-s} f\|_{L^2_{t,x}(C^R_\tau)}
  \\\times\Bigl(\|\la r\ra^{-\frac{1}{2}-s}Z^{\le 2}
  g\|_{L^2_{t,x}(\tC^R_\tau)}
+ \|\la r\ra^{\frac{1}{2}-s}r^{-1}(\partial_t+\partial_r)(rZ^{\le
  2}g)\|_{L^2_{t,x}(\tC^R_\tau)}
+\|\la r\ra^{\frac{1}{2}-s} \ang Z^{\le 2} g\|_{L^2_{t,x}(\tC^R_\tau)}
  \Bigr)^{p-1}
\end{multline*}
provided that $1\le R\le \tau/2$.  Thus, upon summing in $R, \tau$,
\eqref{Z2R} follows.\qed

\vskip .125in
\subsubsection{Proof of \eqref{Z1U}}
We now observe that H\"older's inequality gives
\[\|\la r\ra^{1-s-\frac{(p-1)a}{2}}\la t-r\ra^{\frac{1}{2}} |g|^{p-2}
  f^2\|_{L^2_{t,x}(C^U_\tau)}
  \lesssim \tau^{1-s-\frac{(p-1)a}{2}} U^{\frac{1}{2}}
  \|g\|^{p-2}_{L^\infty_t L^\infty_x(C^U_\tau)}
  \|f\|^2_{L^4_tL^4_x(C^U_\tau)}.\]
We assume $1\le U\le \tau/4$.
To the factors in the right side, we apply \eqref{CUT} and
\eqref{CUTq} respectively, which bounds the above by
\begin{multline*}
  \Bigl(\|\la r\ra^{-\frac{1}{2}-s+\sigma} \la t-r\ra^{-\sigma} Z^{\le
    2} g\|_{L^2_{t,x}(\tC^U_\tau)} +
\|\la r\ra^{\frac{1}{2}-s+\sigma} \la t-r\ra^{-\sigma} r^{-1}(\partial_t+\partial_r)(rZ^{\le
    2} g)\|_{L^2_{t,x}(\tC^U_\tau) }
    \Bigr)^{p-2}
  \\\times \Bigl(\|\la r\ra^{-\frac{1}{2}-s+\sigma} \la
  t-r\ra^{-\sigma} Z^{\le 1} f\|_{L^2_{t,x}(\tC^U_\tau)} +\|\la r\ra^{\frac{1}{2}-s+\sigma} \la
  t-r\ra^{-\sigma} r^{-1}(\partial_t+\partial_r)(rf)\|_{L^2_{t,x}(\tC^U_\tau)}\Bigr)^2.
\end{multline*}
It then follows that
\begin{multline*}
  \|\la r\ra^{1-s-\frac{(p-1)a}{2}}\la t-r\ra^{\frac{1}{2}} |g|^{p-2}
  f^2\|_{\ell^1_U \ell^2_{\tau\ge 4U}L^2_{t,x}(C^U_\tau)}
  \\\lesssim
  \Bigl(\|\la r\ra^{-\frac{1}{2}-s+\sigma} \la t-r\ra^{-\sigma} Z^{\le
    2} g\|^{p-2}_{\ell^p_U \ell^2_{\tau\ge 4U}L^2_{t,x}(\tC^U_\tau)} +
\|\la r\ra^{\frac{1}{2}-s+\sigma} \la t-r\ra^{-\sigma} r^{-1}(\partial_t+\partial_r)(rZ^{\le
    2} g)\|^{p-2}_{\ell^p_U \ell^2_{\tau\ge 4U}L^2_{t,x}(\tC^U_\tau) }
    \Bigr)
  \\\times \Bigl(\|\la r\ra^{-\frac{1}{2}-s+\sigma} \la
  t-r\ra^{-\sigma} Z^{\le 1} f\|^2_{\ell^p_U \ell^2_{\tau\ge 4U}L^2_{t,x}(\tC^U_\tau)} +\|\la r\ra^{\frac{1}{2}-s+\sigma} \la
  t-r\ra^{-\sigma} r^{-1}(\partial_t+\partial_r)(rf)\|^2_{\ell^p_U \ell^2_{\tau\ge 4U}L^2_{t,x}(\tC^U_\tau)}\Bigr),
\end{multline*}
which provides the $\O(\|Z^{\le 2} g\|^{p-2}_X \|Z^{\le 1} f\|^2_X)$
bound that we sought.\qed

\vskip .125in
\subsubsection{Proof of \eqref{Z1R}}
We argue exactly as in the proof of \eqref{Z1U} and replace the
application of \eqref{CUT} and \eqref{CUTq} with \eqref{CRT} and \eqref{CRTq} to see
\begin{multline*}
  \|\la r\ra^{\frac{3}{2}-s-\frac{(p-1)a}{2}}
|g|^{p-2}f^2\|_{\ell^2_\tau \ell^2_{R\le \tau/2}L^2_{t,x}(C^R_\tau)}
\\\lesssim\Bigl(\|\la r\ra^{-\frac{1}{2}-s} Z^{\le
  2}g\|_{L^2_{t,x}}^{p-2}
+ \|\la r\ra^{\frac{1}{2}-s} r^{-1}(\partial_t+\partial_r)(rZ^{\le
  2}g)\|_{L^2_{t,x} }^{p-2}
+\|\la r\ra^{\frac{1}{2}-s} \ang Z^{\le 2} g\|^{p-2}_{L^2_{t,x}}
    \Bigr)
\\\times \Bigl(\|\la r\ra^{-\frac{1}{2}-s}Z^{\le 1}
f\|^2_{L^2_{t,x}} +  \|\la r\ra^{\frac{1}{2}-s}r^{-1}
(\partial_t+\partial_r)(rZ^{\le 1} f)\|^2_{L^2_{t,x}}
+\|\la r\ra^{\frac{1}{2}-s} \ang Z^{\le 1} f\|^2_{L^2_{t,x}}
\Bigr),
\end{multline*}
which is $\O(\|Z^{\le 2} g\|_X^{p-2}\|Z^{\le 1} f\|_X^2)$ as
required.  Here we have also used that $\|\cd\|_{\ell^{2p}} \lesssim
\|\cd \|_{\ell^2}$ repeatedly.
\qed

\vskip .125in
\subsection{Boundedness}
We now seek to show \eqref{bdd_goal}, which we shall do via
induction. By \eqref{rp}, there exists
$C_0$ so that
\[\|Z^{\le 2} v_1\|_X \le C_0\varepsilon\]
and
\begin{multline}\label{bdd1}\|Z^{\le 2} v_{j+1}\|_X \le C_0 \varepsilon + C\|\la
  r\ra^{\frac{3}{2}-s+\frac{a}{2}} Z^{\le 2} (F_p(e^{-\frac{a}{2}\phi(r)} v_j))\|_{\ell^2_\tau\ell^2_{R\le \tau/2}
  L^2_{t,x}(C^R_\tau)}
\\+ C\|\la r\ra^{1-s+\frac{a}{2}} \la t-r\ra^{\frac{1}{2}}Z^{\le 2}(F_p(e^{-\frac{a}{2}\phi(r)} v_j))\|_{\ell^1_U
    \ell^2_{\tau\ge 4U}L^2_{t,x}(C^U_\tau)}.
\end{multline}

The proof of \eqref{bdd_goal} will be complete if we can use
\eqref{bdd_goal} (as the inductive hypothesis) to show that
\begin{multline}
  \label{bdd_goal1}
  \|\la
  r\ra^{\frac{3}{2}-s+\frac{a}{2}} Z^{\le 2}(F_p(e^{-\frac{a}{2}\phi(r)} v_j))\|_{\ell^2_\tau\ell^2_{R\le \tau/2}
  L^2_{t,x}(C^R_\tau)}
\\+ \|\la r\ra^{1-s+\frac{a}{2}} \la t-r\ra^{\frac{1}{2}}Z^{\le 2}(F_p(e^{-\frac{a}{2}\phi(r)} v_j))\|_{\ell^1_U
    \ell^2_{\tau\ge 4U}L^2_{t,x}(C^U_\tau)} \lesssim \varepsilon^p,
\end{multline}
which then, when combined with \eqref{bdd1}, gives \eqref{bdd_goal}
provided that $\varepsilon>0$ is sufficiently small.

From \eqref{Fp} and \eqref{vw}, we get
\[|Z^{\le 2} \Bigl(F_p(e^{-\frac{a}{2}\phi(r)} v_j)\Bigr)|\lesssim
\la r\ra^{-\frac{ap}{2}}\Bigl( |v_j|^{p-1} |Z^{\le 2} v_j| +
|v_j|^{p-2}|Z^{\le 1} v_j|^2\Bigr).
\]
Thus, the left side of \eqref{bdd_goal1} is bounded by
\begin{multline}
  \label{bdd2}
\|\la r\ra^{1-s-\frac{(p-1)a}{2}} \la t-r\ra^{\frac{1}{2}}|v_j|^{p-1} Z^{\le 2}v_j\|_{\ell^1_U
    \ell^1_{\tau\ge 4U}L^2_{t,x}(C^U_\tau)}
+  
    \|\la
  r\ra^{\frac{3}{2}-s-\frac{(p-1)a}{2}} |v_j|^{p-1} Z^{\le 2} v_j \|_{\ell^2_\tau\ell^2_{R\le \tau/2}
  L^2_{t,x}(C^R_\tau)}
\\+ \|\la r\ra^{1-s-\frac{(p-2)a}{2}} \la t-r\ra^{\frac{1}{2}}|v_j|^{p-2} (Z^{\le 1}v_j)^2\|_{\ell^1_U
    \ell^1_{\tau\ge 4U}L^2_{t,x}(C^U_\tau)}
+  
    \|\la
  r\ra^{\frac{3}{2}-s-\frac{(p-2)a}{2}} |v_j|^{p-2} (Z^{\le 1} v_j)^2 \|_{\ell^2_\tau\ell^2_{R\le \tau/2}
  L^2_{t,x}(C^R_\tau)}.
\end{multline}
We apply \eqref{Z2U}, \eqref{Z2R}, \eqref{Z1U}, and \eqref{Z1R}
to each of the four terms respectively.  For the first two terms, we
select $g=v_j$, $f=Z^{\le 2}v_j$, while for the latter two terms we
use $g=v_j$, $f=Z^{\le 1}v_j$.  It immediately follows that the left
side of \eqref{bdd_goal1} is $\O(\|Z^{\le 2}v_j\|^p_X)$, which by the
hypothesis yields \eqref{bdd_goal1}.

\vskip .125in
\subsection{Convergence}
It will follow that $(v_j)$ is Cauchy if we can prove that
\begin{equation}
  \label{Cauchy}
  \|v_{j+1}-v_j\|_X \le \frac{1}{2}\|v_j-v_{j-1}\|_X,\quad \forall j\ge
  1.
\end{equation}
Since \eqref{Fp} and \eqref{vw} gives
\[|F_p(e^{-\frac{a}{2}\phi(r)} v_j) - F_p(e^{-\frac{a}{2}\phi(r)} v_{j-1})|
  \lesssim \la r\ra^{\frac{-ap}{2}}
  \Bigl(|v_j|^{p-1}+|v_{j-1}|^{p-1}\Bigr) |v_j-v_{j-1}|,\]
by \eqref{rpz0}, we get
\begin{multline*}
  \|v_{j+1}-v_j\|_X\lesssim \|\la r\ra^{1-s-\frac{(p-1)a}{2}} \la
  t-r\ra^{\frac{1}{2}}
  \Bigl(|v_j|^{p-1}+|v_{j-1}|^{p-1}\Bigr)|v_j-v_{j-1}|\|_{\ell^1_U
    \ell^2_{\tau\ge 4U} L^2_{t,x}(C^U_\tau)}
  \\+  \|\la r\ra^{\frac{3}{2}-s-\frac{(p-1)a}{2}} 
  \Bigl(|v_j|^{p-1}+|v_{j-1}|^{p-1}\Bigr)|v_j-v_{j-1}|\|_{\ell^2_\tau
    \ell^2_{R\le \tau/2} L^2_{t,x}(C^R_\tau)}.
\end{multline*}
By \eqref{Z2U} and \eqref{Z2R}, this is in turn controlled by
\[\Bigl(\|Z^{\le 2} v_j\|_X^{p-1} + \|Z^{\le
    2}v_{j-1}\|^{p-1}_X\Bigr)\|v_j-v_{j-1}\|_X.\]
Thus, if we apply \eqref{bdd_goal} with $\varepsilon$ sufficiently
small, we obtain \eqref{Cauchy}, which completes the proof.

\vskip .25in

\begin{thebibliography}{10}

\bibitem{AMW}
Elliot Adamson, Jason Metcalfe, and Chengbo Wang.
\newblock The {S}trauss conjecture for the wave equation with scattering
  damping.
\newblock {P}reprint, 2026.

\bibitem{Alinhac_ghost}
S.~Alinhac.
\newblock The null condition for quasilinear wave equations in two space
  dimensions {I}.
\newblock {\em Invent. Math.}, 145(3):597--618, 2001.

\bibitem{Dabbicco-Lucente-Reissig}
Marcello D'Abbicco, Sandra Lucente, and Michael Reissig.
\newblock A shift in the {S}trauss exponent for semilinear wave equations with
  a not effective damping.
\newblock {\em J. Differential Equations}, 259(10):5040--5073, 2015.

\bibitem{DaRo_rp}
Mihalis Dafermos and Igor Rodnianski.
\newblock A new physical-space approach to decay for the wave equation with
  applications to black hole spacetimes.
\newblock In {\em X{VI}th {I}nternational {C}ongress on {M}athematical
  {P}hysics}, pages 421--432. World Sci. Publ., Hackensack, NJ, 2010.

\bibitem{FaWa}
Daoyuan Fang and Chengbo Wang.
\newblock Weighted {S}trichartz estimates with angular regularity and their
  applications.
\newblock {\em Forum Math.}, 23(1):181--205, 2011.

\bibitem{GLS97}
Vladimir Georgiev, Hans Lindblad, and Christopher~D. Sogge.
\newblock Weighted {S}trichartz estimates and global existence for semilinear
  wave equations.
\newblock {\em Amer. J. Math.}, 119(6):1291--1319, 1997.

\bibitem{He-Lai}
Daoyin He and Ning-An Lai.
\newblock Morawetz type estimate for damped wave equation in high dimension and
  its application.
\newblock {\em Nonlinearity}, 39(8):Paper No. 085020, 26, 2026.

\bibitem{HMSSZ}
Kunio Hidano, Jason Metcalfe, Hart~F. Smith, Christopher~D. Sogge, and Yi~Zhou.
\newblock On abstract {S}trichartz estimates and the {S}trauss conjecture for
  nontrapping obstacles.
\newblock {\em Trans. Amer. Math. Soc.}, 362(5):2789--2809, 2010.

\bibitem{Ikehata-Todorova-Yordanov}
Ryo Ikehata, Grozdena Todorova, and Borislav Yordanov.
\newblock Critical exponent for semilinear wave equations with space-dependent
  potential.
\newblock {\em Funkcial. Ekvac.}, 52(3):411--435, 2009.

\bibitem{John79}
Fritz John.
\newblock Blow-up of solutions of nonlinear wave equations in three space
  dimensions.
\newblock {\em Manuscripta Math.}, 28(1-3):235--268, 1979.

\bibitem{Lai-Liu-Tu-Wang}
Ning-An Lai, Mengyun Liu, Ziheng Tu, and Chengbo Wang.
\newblock Lifespan estimates for semilinear wave equations with space dependent
  damping and potential.
\newblock {\em Calc. Var. Partial Differential Equations}, 62(2):Paper No. 44,
  32, 2023.

\bibitem{Lai-Takamura}
Ning-An Lai and Hiroyuki Takamura.
\newblock Blow-up for semilinear damped wave equations with subcritical
  exponent in the scattering case.
\newblock {\em Nonlinear Anal.}, 168:222--237, 2018.

\bibitem{Lai-Tu}
Ning-An Lai and Ziheng Tu.
\newblock Strauss exponent for semilinear wave equations with scattering space
  dependent damping.
\newblock {\em Journal of Mathematical Analysis and Applications}, page 124189,
  2020.

\bibitem{Lai-Zhou}
Ning-An Lai and Yi~Zhou.
\newblock Global existence for semilinear wave equations with scaling invariant
  damping in 3-{D}.
\newblock {\em Nonlinear Anal.}, 210:Paper No. 112392, 12, 2021.

\bibitem{Li}
Xinfu Li.
\newblock Critical exponent for semilinear wave equation with critical
  potential.
\newblock {\em NoDEA Nonlinear Differential Equations Appl.}, 20(3):1379--1391,
  2013.

\bibitem{LMSTW}
Hans Lindblad, Jason Metcalfe, Christopher~D. Sogge, Mihai Tohaneanu, and
  Chengbo Wang.
\newblock The {S}trauss conjecture on {K}err black hole backgrounds.
\newblock {\em Math. Ann.}, 359(3-4):637--661, 2014.

\bibitem{Liu-Wang}
Mengyun Liu and Chengbo Wang.
\newblock Global existence for semilinear damped wave equations in relation
  with the {S}trauss conjecture.
\newblock {\em Discrete Contin. Dyn. Syst.}, 40(2):709--724, 2020.

\bibitem{Looi-Tohaneanu}
Shi-Zhuo Looi and Mihai Tohaneanu.
\newblock Global existence and pointwise decay for nonlinear waves under the
  null condition.
\newblock {\em Dyn. Partial Differ. Equ.}, 23(3):225--268, 2026.

\bibitem{M-Rhoads}
Jason Metcalfe and Taylor Rhoads.
\newblock Long-time existence for systems of quasilinear wave equations.
\newblock {\em Matematica}, 2(1):37--84, 2023.

\bibitem{MetSo06}
Jason Metcalfe and Christopher~D. Sogge.
\newblock Long-time existence of quasilinear wave equations exterior to
  star-shaped obstacles via energy methods.
\newblock {\em SIAM J. Math. Anal.}, 38(1):188--209, 2006.

\bibitem{M-Stewart}
Jason Metcalfe and Alexander Stewart.
\newblock On a system of weakly null semilinear wave equations.
\newblock {\em Anal. Math. Phys.}, 12(5):Paper No. 125, 23, 2022.

\bibitem{MTT}
Jason Metcalfe, Daniel Tataru, and Mihai Tohaneanu.
\newblock Price's law on nonstationary space-times.
\newblock {\em Adv. Math.}, 230(3):995--1028, 2012.

\bibitem{MW}
Jason Metcalfe and Chengbo Wang.
\newblock The {S}trauss conjecture on asymptotically flat space-times.
\newblock {\em SIAM J. Math. Anal.}, 49(6):4579--4594, 2017.

\bibitem{Nishihara-Sobajima-Wakasugi}
Kenji Nishihara, Motohiro Sobajima, and Yuta Wakasugi.
\newblock Critical exponent for the semilinear wave equations with a damping
  increasing in the far field.
\newblock {\em NoDEA Nonlinear Differential Equations Appl.}, 25(6):Paper No.
  55, 32, 2018.

\bibitem{Li-Yin}
Li~Qianqian and Yin Huicheng.
\newblock Global small data weak solutions of 2-{D} semilinear wave equations
  with scale-invariant damping, {III}.
\newblock {\em J. Differential Equations}, 455:Paper No. 113972, 34, 2026.

\bibitem{Sideris}
Thomas~C. Sideris.
\newblock Nonexistence of global solutions to semilinear wave equations in high
  dimensions.
\newblock {\em J. Differential Equations}, 52(3):378--406, 1984.

\bibitem{Sobajima}
Motohiro Sobajima.
\newblock On global existence for semilinear wave equations with
  space-dependent critical damping.
\newblock {\em J. Math. Soc. Japan}, 75(2):603--627, 2023.

\bibitem{Sterbenz}
Jacob Sterbenz.
\newblock Angular regularity and {S}trichartz estimates for the wave equation.
\newblock {\em Int. Math. Res. Not.}, (4):187--231, 2005.
\newblock With an appendix by Igor Rodnianski.

\bibitem{Ta01-2}
Daniel Tataru.
\newblock Strichartz estimates in the hyperbolic space and global existence for
  the semilinear wave equation.
\newblock {\em Trans. Amer. Math. Soc.}, 353(2):795--807 (electronic), 2001.

\end{thebibliography}

\end{document}